\newtheorem{theorem}{Theorem}[section]
\newtheorem{lemma}[theorem]{Lemma}
\theoremstyle{definition}
\newtheorem{definition}{Definition}[section]
\theoremstyle{remark}
\numberwithin{equation}{section}
\newcommand{\Fm}{\mathbb{F}_{q^m}}
\title{Primitive normal pairs with prescribed traces over finite fields}
\keywords{Finite fields; Primitive elements; Normal elements; Additive and multiplicative characters; Trace}
\subjclass[2020]{12E20, 11T23}
\author{Shikhamoni Nath}
\address{Department of Mathematical Sciences, Tezpur University, Tezpur, Assam, 784028, India}
\email{shikha@tezu.ernet.in}
\author{Arpan Chandra Mazumder}
\address{Department of Mathematical Sciences, Tezpur University, Tezpur, Assam, 784028, India}
\email{arpan10@tezu.ernet.in}
\author{Dhiren Kumar Basnet}
\address{Department of Mathematical Sciences, Tezpur University, Tezpur, Assam, 784028, India}
\email{dbasnet@tezu.ernet.in}
\begin{document}
	\begin{abstract}
		Let $q$ be a positive integral power of some prime $p$ and $\mathbb{F}_{q^m}$ be a finite field with $q^m$ elements for some $m \in \mathbb{N}$. Here we establish a sufficient condition for the existence of primitive normal pairs of the type $(\epsilon, f(\epsilon))$ in $\mathbb{F}_{q^m}$ over $\mathbb{F}_{q}$ with two prescribed traces, $Tr_{{\mathbb{F}_{q^m}}/{\mathbb{F}_q}}(\epsilon)=a$ and $Tr_{{\mathbb{F}_{q^m}}/{\mathbb{F}_q}}(f(\epsilon))=b$, where $f(x) \in \mathbb{F}_{q^m}(x)$ is a rational function with some restrictions and $a, b \in \mathbb{F}_q$. Furthermore, for $q=5^k$, $m \geq 9$ and rational functions with degree sum 4, we explicitly find at most 12 fields in which the desired pair may not exist.
	\end{abstract}
	
	\maketitle
	
	\section{Introduction}
	
	Given a prime power  $q$  and a positive integer $m$, we denote the finite field with $q$ elements by $\mathbb{F}_q$ and its extension field of degree $m$ by $\mathbb{F}_{q^m}$. The multiplicative group $\mathbb{F}^*_{q^m}$ is a cylic group and a generator of this group is called a {\em primitive} element. For an element $\epsilon\in\mathbb{F}_{q^m}$, the elements $\epsilon,\epsilon^q, \epsilon^{q^2},\ldots, \epsilon^{q^{m-1}}$ are called conjugates of $\epsilon$ with respect to $\mathbb{F}_q$. Sum of all these conjugates is called trace of $\epsilon$ over $\mathbb{F}_q$ and is denoted by $Tr_{\mathbb{F}_{q^m}/\mathbb{F}_q}(\epsilon)$. Further, an element $\epsilon\in\mathbb{F}_{q^m}$ for which the set $\{\epsilon,\epsilon^q, \epsilon^{q^2},\ldots, \epsilon^{q^{m-1}}\}$ forms a basis of $\mathbb{F}_{q^m}( \mathbb{F}_q) $, when considered as a vector space, is called a $normal$ element. An element which is simultaneously primitive and normal is referred as a primitive normal element. Existence of such elements is firstly given by the following theorem.
	
	\begin{theorem}{\textbf{ \em (Primitive normal basis theorem)}}. In the finite field $\mathbb{F}_{q^m}$, there always exists some element which is simultaneously primitive and normal.
	\end{theorem}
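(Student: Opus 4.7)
The plan is to prove this by the standard character-sum method that underlies the Lenstra--Schoof argument. The key idea is to write the number of primitive normal elements of $\mathbb{F}_{q^m}$ as a weighted double sum of characters, isolate a main term coming from the trivial characters, and bound the rest using Weil-type estimates.

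First, I would introduce two characteristic functions. An element $\epsilon\in\mathbb{F}_{q^m}^*$ is primitive iff for every prime $\ell\mid q^m-1$ it is not an $\ell$-th power; dually, viewing $\mathbb{F}_{q^m}$ as an $\mathbb{F}_q[x]$-module via the Frobenius action $x\cdot\epsilon=\epsilon^q$, an element is normal iff its $\mathbb{F}_q[x]$-annihilator equals $x^m-1$, i.e. it is ``$(x^m-1)$-free''. By Möbius inversion on the divisor lattice of $q^m-1$ and of $x^m-1$ respectively, each of these conditions can be written as a linear combination of multiplicative (resp. additive) character sums, with coefficients of the shape $\mu(d)/\varphi(d)$ and analogous ones on the polynomial side.

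Next, I would let $N$ denote the number of primitive normal elements. Substituting both indicators and interchanging sums yields
\[
N \;=\; \frac{\varphi(q^m-1)}{q^m-1}\cdot\frac{\Phi_q(x^m-1)}{q^m}\;\sum_{d\mid q^m-1}\sum_{g\mid x^m-1}\frac{\mu(d)\mu_q(g)}{\varphi(d)\Phi_q(g)}\sum_{\chi,\psi}\sum_{\epsilon\in\mathbb{F}_{q^m}}\chi(\epsilon)\psi(\epsilon),
\]
where $\chi$ has order $d$ and $\psi$ is a suitable additive character attached to $g$. The pair $(d,g)=(1,1)$ produces the main term, which equals $\varphi(q^m-1)\,\Phi_q(x^m-1)/(q^m-1)$ after simplification. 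For any nontrivial pair $(d,g)\neq(1,1)$, the inner sum is a Gauss/hybrid character sum of absolute value at most $q^{m/2}$, which is the decisive Weil-style estimate.

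Combining these ingredients gives the sufficient criterion
\[
q^{m/2} \;>\; W(q^m-1)\,W_q(x^m-1),
\]
with $W$ and $W_q$ counting squarefree divisors on the integer and polynomial sides. This inequality holds outside a finite explicit list of pairs $(q,m)$, which is where the main obstacle lies: the bound is insufficient precisely in the small cases that the theorem is hardest to verify. To close these gaps, I would apply a sieving refinement (Cohen's sieve) to shave the number of characters one has to control, and for the small residual list of exceptional $(q,m)$ resort to direct computational verification, exhibiting an explicit primitive normal element. The obstruction from small $q$ and small $m$ is the characteristic difficulty of the entire family of ``primitive normal''-type theorems, and is exactly what forces the computer-assisted finale in the Lenstra--Schoof proof and its descendants to which this paper belongs.
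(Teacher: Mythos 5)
Your outline is the standard character-sum strategy of Lenstra--Schoof and Cohen--Huczynska, and it is also exactly the template this paper itself uses (characteristic functions $\rho_e$ and $\eta_g$ via M\"obius inversion on the divisor lattices of $q^m-1$ and $x^m-1$, a main term from the trivial characters, Weil-type bounds of size $q^{m/2}$ for the hybrid sums, the sufficient condition $q^{m/2}>W(q^m-1)\,W(x^m-1)$, and a sieve plus computation for the residual cases). Note, however, that the paper does not prove this theorem at all: it states it and cites Lenstra--Schoof \cite{2} and Cohen--Huczynska \cite{3}, reserving the analogous machinery for its own refined statement about pairs with prescribed traces. So there is no ``paper proof'' to compare against in detail; measured against the cited proofs, your write-up is a correct high-level plan but not yet a proof. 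The entire difficulty of the theorem is concentrated in the step you defer to ``sieving refinement plus direct computational verification'': the basic inequality fails for infinitely many small $q$ when $m$ is small, and closing that gap is precisely the content of the sieving arguments of \cite{3} (or the computer search of \cite{2}). As a blind reconstruction of the known argument your proposal is faithful; as a self-contained proof it is incomplete at exactly the point where all such theorems are hard.
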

	
	Lenstra and Schoof proved this result in \cite{2}. Later, Cohen and Huczynska \cite{3} presented a computer-free proof of this result with the help of sieving techniques.
	
	In 1985, Cohen \cite{pair}, proved existence of consecutive pairs of primitive elements of the type $(\epsilon, \epsilon^{-1})$ in $\mathbb{F}_q$. After that, it was further developed in \cite{cohen1}, \cite{booker}, \cite{cohen2}, among others.
	Similar studies on primitive normal pairs emerged following the development of the primitive normal basis theorem; they can be studied from \cite{carvalho}, \cite{kapetanakis}, \cite{sharma}, and so on, as well as the references therein.
	\par In 2001, Chou and Cohen \cite{chou} studied existence of primitive pair $(\epsilon, \epsilon^{-1})$ in $\mathbb{F}_q$ such that both have trace zero over $\mathbb{F}_p$. Cao and Wang \cite{cao} proved that, for any prime power $q$ and any positive integer $m$, there exists elements $\epsilon \in \mathbb{F}_{q^m}$ such that $\epsilon+\epsilon^{-1}$ is a primitive element of $\mathbb{F}_{q^m}$ and $Tr_{{\mathbb{F}_{q^m}}/{\mathbb{F}_q}}(\epsilon)=a$ and $Tr_{{\mathbb{F}_{q^m}}/{\mathbb{F}_q}}(\epsilon^{-1})=b$ for any prescribed $a, b \in \mathbb{F}^*_{q}$. In \cite{choudhary}, Choudhary et al. discussed the existence of primitive pairs $(\epsilon, f(\epsilon))$ with two prescribed traces, where $f$ is a rational function with some restrictions. Recently Sharma et al. \cite{sharma2} considered rational functions $f$ with some minor restrictions over $\mathbb{F}_{q^m}$ for some prime power $q$ and $m \in \mathbb{N}$ and provided a sufficient condition for the existence of primitive normal pair of the form $(\epsilon, f(\epsilon))$ over $\mathbb{F}_q$.
	
	Inspired by all these results we are presenting a sufficient condition for existence of primitive normal pair $(\epsilon,f(\epsilon))$ in $\mathbb{F}_{q^m}$ over $\mathbb{F}_q$ such that $Tr_{\mathbb{F}_{q^m}/\mathbb{F}_q}(\epsilon)=a$ and $Tr_{\mathbb{F}_{q^m}/\mathbb{F}_q}(f(\epsilon))=b$ where $f(x) \in \mathbb{F}_{q^m}(x)$ is a rational function with some restrictions and $a,b \in \mathbb{F}_q$. The kind of rational functions we will study in this paper is defined below:
	
	\begin{definition}\label{def1.1}
		The set $\mathcal{R}_{q^m}^n$ consists of rational functions of the simplest form $f=\frac{f_1}{f_2}$ of degree sum $n=\deg(f_1)+\deg(f_2)$, such that \\
		
		(i) $f \neq cx^{j}h^{d}$ for any $c \in \mathbb{F}^*_{q^m}$, any $j \in \mathbb{Z}$, any $h\in \mathbb{F}_{q^m}(x)$ and any divisor $d$ other than $1$ of $q^m-1$.\\
		
		(ii) There exists at least one monic irreducible factor $g$ of $f_2$ in $\mathbb{F}_{q^m}[x]$ with multiplicity $r$ such that $q^m \nmid r$.  
	\end{definition}
	We will use $\deg(f_1)=n_1$ and $\deg(f_2)=n_2$ throughout.
	We also note that the second condition of the above definition implies that $n_2>0$. This type of rational functions are popularly known as Cohen's kind of rational functions \cite{cohen2} that also includes Carvalho's kind of rational functions \cite{carvalho}.
	
	\par In Section 3, we derive a sufficient condition for the existence of desired primitive normal pairs. In Section 4, we use a sieving inequality to weaken the sufficient condition for more efficient results. In Section 5, we study the application of our results over fields of chracteristic $5$ and for rational functions of degree sum $4$ and proved the following:
	
	\begin{theorem}\label{main}
		Let $q=5^k, m\geq9$ and $f \in \mathcal{R}_{q^m}^n$. Then for any $a,b \in \mathbb{F}_q$, there exists an element $\epsilon \in \mathbb{F}_{q^m}$ with $Tr_{{\mathbb{F}_{q^m}}/{\mathbb{F}_q}}(\epsilon)=a$, $Tr_{{\mathbb{F}_{q^m}}/{\mathbb{F}_q}}(f(\epsilon))=b$ and $(\epsilon,f(\epsilon))$ is a primitive normal pair in $\mathbb{F}_{q^m}$ over $\mathbb{F}_q$ unless $(q,m)$ is one of the $12$ pairs : $ (5,9)$, $(5,10)$, $(5,11)$, $(5,12)$, $(5,14)$, $(5,18)$, $(5,20)$, $(5,24)$, $(5^2,9)$, $(5^2,10)$, $(5^2,12)$, $(5^2,24)$. 
	\end{theorem}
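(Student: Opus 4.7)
The plan is to specialise the sieving-based sufficient condition derived in Section 4 to the case $q = 5^k$, $m \geq 9$, and rational functions of degree sum $n = 4$, and thereby reduce the theorem to a finite set of explicit numerical inequalities.

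Substituting $n = 4$, the sufficient condition takes the schematic form
$$q^{m/2} \;>\; C \cdot W(q^m - 1) \cdot W(x^m - 1) \cdot \Theta(\ell_1, \ell_2),$$
where $W(\cdot)$ denotes the count of squarefree divisors (with its polynomial analogue over $\F$ for $x^m - 1$), the divisors $\ell_1 \mid q^m - 1$ and $\ell_2 \mid x^m - 1$ serve as sieving parameters, and $C, \Theta$ are explicit small quantities. Since $W(N) \ll_{\varepsilon} N^{\varepsilon}$ uniformly, the right-hand side is $o(q^{m/2})$, so the inequality is valid for all but finitely many pairs $(q, m)$ with $q = 5^k$ and $m \geq 9$.

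To make that finite set explicit, I would apply a Robin-type bound on $W$ to extract an effective threshold $M(k)$ beyond which the inequality is automatic, and verify that $M(k) < 9$ for all $k \geq 3$. This collapses the problem to $k \in \{1, 2\}$ and the finite range $9 \leq m \leq M(k)$. For each such pair $(q, m)$ I would factor $q^m - 1$ over $\mathbb{Z}$ and $x^m - 1$ over $\F[x]$ exactly, compute the relevant $W$-values and the number of $\F$-irreducible factors of $x^m-1$ (taking care at the $5$-part of $m$), and test the plain non-sieved inequality first. Pairs that pass are settled; those that fail proceed to sharpening.

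For the residual pairs I would invoke the full sieving inequality, choosing the sieving set to exclude the largest prime divisors of $q^m - 1$ and the highest-degree irreducible factors of $x^m - 1$, since these contribute the greatest penalty to $W$ while costing comparatively little in $\Theta$. The main obstacle is precisely this combinatorial optimisation: the sharpened bound is very sensitive to the choice of sieving set, so for borderline pairs several partitions must be tried, essentially by computer. A systematic enumeration should leave exactly the 12 pairs listed in the statement as the residual cases where the method is inconclusive, completing the proof.
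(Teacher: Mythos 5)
Your proposal follows essentially the same route as the paper: the character-sum sufficient condition of Section 3, the prime sieve of Section 4, a Robin-type bound $W(t)<D\,t^{1/\nu}$ on the number of squarefree divisors to reduce to finitely many pairs $(q,m)$, and an explicit computational search for sieving parameters $d,g$ in the residual cases, leaving exactly the 12 listed exceptions. One quantitative caveat: the plain bound does not yield a threshold $M(k)<9$ for all $k\geq 3$ --- the paper's Table \ref{table1} shows the threshold from the unsieved inequality is as large as $m\geq 520$ for $k=3$, so roughly 220 candidate pairs with $3\leq k\leq 47$ still require the exact inequality and the sieve; moreover, for $k=1,2$ the paper keeps the finite check tractable by writing $m=m'5^{j}$, using $W(x^m-1)=W(x^{m'}-1)$, and invoking the Cohen--Huczynska bounds on the proportion of irreducible factors of $x^{m'}-1$ together with the estimate $L\leq 2m'$. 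These are refinements of, not departures from, your plan.
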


	\par Now, we present some notations that are significant to this article. We indicate the algebraic closure of $\mathbb{F}_{q^m}$ by $\mathbb{F}$. Let $\mathcal{A}_p^n$ be the set of pairs $(q,m)$ such that there exists an element $\epsilon \in \mathbb{F}_{q^m}$ with  $Tr_{{\mathbb{F}_{q^m}}/{\mathbb{F}_q}}(\epsilon)=a$ and $Tr_{{\mathbb{F}_{q^m}}/{\mathbb{F}_q}}(f(\epsilon))=b$, where $f(x) \in \mathcal{R}_{q^m}^n$ and $a, b \in \mathbb{F}_q$.
	For each $n \in \mathbb{N}$, we denote the number of prime divisors of $n$ and the number of square-free divisors of $n$  by $w(n)$ and $W(n)$, respectively. Additionally, for $r(x) \in \mathbb{F}_q[x]$, we indicate the number of square-free and monic irreducible $\mathbb{F}_q$-divisors of $r$ by $W(r)$ and $w(r)$, respectively.

	\section{Preliminaries}
	
	In this section, we review a few definitions, lemmas, and notations that will be used in proving a sufficient condition for the existence of primitive normal pairs of our interest.
	\begin{definition}{\textbf{(Characters)}}
		Let $G$ be a finite abelian group. Then a character $\chi$ of $G$ is a homomorphism from $G$ to the group $S^1:= \{z \in \mathbb{C} : |z| = 1\}$. The characters of $G$ form a group under multiplication called $\mathit{dual\thinspace group}$ or $\mathit{character\thinspace group}$ of $G$ which is denoted by $\widehat{G}$. It is well known that $\widehat{G}$ is isomorphic to $G$. Also, $\chi_1$ denotes the trivial character of $G$ defined as $\chi_1(a)=1_G$, for all $a \in G$, where $1_G$ is the identity element of the group.
		\par  If the order of a character of the group $G$ is $d$, then it is denoted by $\chi_d$. We also denote all the characters of order $d$ by $(d)$. 
		
	\end{definition}
	
	Corresponding to two different abelian group structures present in a finite field $\mathbb{F}_{q^m}$ i.e., $(\mathbb{F}_{q^m}, +)$ and $(\mathbb{F}^*_{q^m},\cdot)$; two types of characters can be formed. We refer to them as additive character denoted by $\psi$ and multiplicative character denoted by $\chi$ respectively. Multiplicative characters are extended from $\mathbb{F}^*_{q^m}$ to $\mathbb{F}_{q^m}$ by the  rule
	\hspace{.1cm}  $\chi(0)=\begin{cases}
		0 \,\mbox{ if}\, \chi\neq\chi_1,\\
		1 \,\mbox{ if}\, \chi=\chi_1.
	\end{cases} $
	
	The additive character $\bar{\psi}(\epsilon)=e^{2{\pi}iTr(\epsilon)/p}$, for all $\epsilon \in \mathbb{F}_{q}$, where $Tr$ is the absolute trace function from $\mathbb{F}_{q}$ to $\mathbb{F}_p$, is called the canonical additive character of $\mathbb{F}_{q}$ and every additive character $\psi_{\alpha}$ for $\alpha \in \mathbb{F}_{q}$ can be expressed in terms of the cannonical additive character $\bar{\psi}$ as $\psi_{\alpha}(\beta)=\bar{\psi}(\alpha\beta)$ for all $\beta \in \mathbb{F}_{q}$.

	\begin{definition}{\textbf{($e$-free element)}}
		For any divisor $e$ of $q^m-1$, an element $\epsilon \in \mathbb{F}_{q^m}$ is called $e$-free if $\epsilon \neq \beta^d$ for any $\beta \in \mathbb{F}_{q^m}$ and for any divisor $d$ of $e$ other than $1$.   
	\end{definition}
	
	An element $\epsilon \in \mathbb{F}^*_{q^m}$ is primitive if and only if $(q^m-1)$-free. The characteristic function for $e$-free elements is defined as follows:
	
	\begin{equation}\label{cfe}
		\rho_e: \mathbb{F}^*_{q^m} \to \{ 0,1\};   \epsilon \mapsto \theta(e) \sum_{d|e}\frac{\mu(d)}{\phi(d)}\sum_{(d)}\chi_{d}(\epsilon),
	\end{equation}
	where $\theta(e)=\frac{\phi(e)}{e}$ and $\phi, \mu$ denotes the Euler's totient function and Mobius function respectively.

	The additive group of $\mathbb{F}_{q^m}$ is a $\mathbb{F}_{q}[x]$-module under the rule $f \circ\alpha=\overset{n}{\underset{i=0}{\sum}} a_i\alpha^{q^i}$; for $\alpha\in \mathbb{F}_{q^m}$ and
	$f\,= \overset{n}{\underset{i=0}{\sum}}a_ix^i\thinspace \in \mathbb{F}_{q}[x]$. For $\alpha \in \mathbb{F}_{q^m}$, the $\mathbb{F}_q$-order of $\alpha$ is the monic $\mathbb{F}_q$-divisor $g$ of $x^m-1$ of minimal degree such that $g \circ\alpha=0$.
	
	Similarly we can define order of an additive character. Let $\psi \in \widehat{\mathbb{F}_{q^m}}$. Then the $\mathbb{F}_q$-order of $\psi$ is the least monic divisor $g$ of $x^m-1$ such that $\psi \circ g $ is the trivial character where $\psi \circ g (\beta)=\psi(g\circ \beta)$.
	
	\begin{definition}{\textbf{($g$-free element)}}
		Let $g$ divides $x^m-1$ and $\epsilon \in \mathbb{F}_{q^m}$. If $\epsilon \neq h \circ \gamma$ for any $\gamma \in \mathbb{F}_{q^m}$ and for any divisor $h$ of $g$ other than $1$, then $\epsilon$ is called a $g$-free element.
	\end{definition} 
	An element $\epsilon \in \mathbb{F}_{q^m}$ is normal if and only if it is $(x^m-1)$-free.
	The characteristic function for $g$-free elements is defined as follows:
	\begin{equation}\label{cfg}
		\eta_g: \mathbb{F}_{q^m} \to \{ 0,1\};   \epsilon \mapsto \Theta(g) \sum_{h|g}\frac{\mu^{'}(g)}{\Phi(g)}\sum_{(h)}\psi_{h}(\epsilon),
	\end{equation}
	
	where $\Theta(g)= \frac{\Phi(g)}{q^{\deg(g)}}$ and $\Phi(g)=|{(\frac{\mathbb{F}_{q}[x]}{<g>})}^{*}|$ and the function $\mu^{'}$ is defined as follows:
	
	$$\mu^{'}(g)=\begin{cases}
		(-1)^s,  \text{ when $g$ is a product of $s$ distinct monic irreducible polynomials}\\
		0, \text{ otherwise}.
	\end{cases} $$
	
	We shall also need characteristic function for prescribed values of trace. For each $a \in \mathbb{F}_q$ the characteristic function for elements $\epsilon \in \mathbb{F}_{q^m}$ such that $Tr_{{\mathbb{F}_{q^m}}/{\mathbb{F}_q}}(\epsilon)=a$ is defined as follows:
	
	\begin{equation}\label{cft}
		\tau_a: \mathbb{F}_{q^m} \to \{ 0,1\}; \epsilon \mapsto \frac{1}{q}\sum_{\psi \in \widehat{\mathbb{F}_q}}\psi(Tr_{{\mathbb{F}_{q^m}}/{\mathbb{F}_q}}(\epsilon)-a).
	\end{equation}
	
	The following lemmas will be handy while finding the sufficient conditions.
	
		\begin{lemma} \label{lemma1}\cite{todd}
		Let $f(x)\in \mathbb{F}_{q^m}(x)$ be a rational function. Write $f(x)=\overset{k}{\underset{j=1}{\prod}}{f_j(x)}^{r_j}$, where $f_j(x) \in \mathbb{F}_{q^m}[x]$ are irreducible polynomials and $r_j $ are nonzero integers. Let $\chi$ be a nontrivial multiplicative character of $\mathbb{F}_{q^m}$ of squarefree order $d$ (a divisor of $q^m-1$). Suppose that $f(x)$ is not of the form $cg(x)^d$ for any rational function $g(x)\in \mathbb{F}(x)$ and $c \in \mathbb{F}^*_{q^m}$. Then we have,
		
		$$\bigg|{\underset{\alpha\in \mathbb{F}_{q^m},f(\alpha)\neq 0, \infty}{\sum}\chi(f(\alpha))}\bigg| \leq \bigg(\overset{k}{\underset{j=1}{\sum}}\deg(f_j)-1\bigg)q^{\frac{m}{2}}.$$
	\end{lemma}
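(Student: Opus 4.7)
The plan is to derive this as an instance of Weil's Riemann Hypothesis for curves over $\mathbb{F}_{q^m}$, applied to the Kummer cover $C_f : y^d = f(x)$.

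First I would verify that $C_f$ is geometrically irreducible. Because $d$ is squarefree and $f$ is not of the form $cg(x)^d$ with $g \in \mathbb{F}(x)$, the polynomial $y^d - f(x)$ is irreducible in $\mathbb{F}[x,y]$ by the standard Kummer criterion. Next I would translate the character sum into a point count: for $\alpha \in \mathbb{F}_{q^m}$ with $f(\alpha) \neq 0, \infty$,
$$\#\{\beta \in \mathbb{F}_{q^m} : \beta^d = f(\alpha)\} = \sum_{(\chi')^d = \chi_1}\chi'(f(\alpha)).$$
Summing over $\alpha$, isolating the trivial character, and using Mobius inversion on the divisor lattice of the squarefree $d$, one expresses $\sum_\alpha \chi(f(\alpha))$ as an integer linear combination of affine point counts on the Kummer curves $y^{d'} = f(x)$ for $d' \mid d$.

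For the geometric input, I would apply Riemann--Hurwitz to the projection $C_f \to \mathbb{P}^1$, $(x,y)\mapsto x$. Ramification is concentrated above the zeros and poles of $f$ whose multiplicities are coprime to $d$, together with the point at infinity when $n_1 - n_2$ is not divisible by $d$; a direct count yields $2g \leq (d-1)(\sum_{j=1}^k \deg(f_j) - 1)$ for the genus of (the smooth projective model of) $C_f$. Weil's theorem then gives $|\#C_f(\mathbb{F}_{q^m}) - (q^m+1)| \leq 2g\sqrt{q^m}$, and combining this with the Mobius decomposition above produces the stated bound $|\sum_\alpha \chi(f(\alpha))| \leq (\sum_j \deg(f_j) - 1)\sqrt{q^m}$ for the specific character $\chi$ of exact order $d$.

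The main obstacle is the ramification bookkeeping needed to land on the exact constant $\sum_j \deg(f_j) - 1$: the infinite place must be treated separately from the finite places, and the situation where some individual $r_j$ happens to be divisible by $d$ (so the corresponding $f_j$ is unramified in the cover) must be reconciled with the bound, which nominally counts all distinct irreducible factors. The squarefreeness of $d$ together with the assumption $f \neq cg^d$ guarantees that at least one finite branch point or the point at infinity is genuinely ramified, which is precisely what produces the $-1$ in $\sum \deg(f_j) - 1$ after absorbing the Euler characteristic $-2$ of $\mathbb{P}^1$ in Riemann--Hurwitz.
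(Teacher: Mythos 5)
The paper does not prove this lemma at all: it is quoted verbatim from Cochrane--Pinner \cite{todd}, whose whole point is to obtain such bounds by Stepanov's elementary method rather than by Weil's theorem. So your Kummer-cover route is a genuinely different (and classical) strategy, but as sketched it has two gaps, one cosmetic and one substantive.

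The cosmetic one: the hypothesis $f\neq cg^d$ does \emph{not} imply that $y^d-f(x)$ is irreducible. If $d=6$ and $f=h^2$ with $h$ not a cube, then $f$ is not of the form $cg^6$, yet $y^6-f=(y^3-h)(y^3+h)$. The Kummer criterion requires $f\notin \mathbb{F}(x)^{*e}$ for every prime $e\mid d$, which is strictly stronger. This is repairable (replace $f$ by $h$ and $\chi$ by $\chi^e$, noting that the set of distinct irreducible factors, hence $\sum_j\deg f_j$, is unchanged), but it must be done. The substantive gap is your claim that M\"obius inversion over the divisors $d'\mid d$ expresses the single sum $\sum_\alpha\chi(f(\alpha))$ as an integer combination of affine point counts of the curves $y^{d'}=f(x)$. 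The point count of $y^{d'}=f(x)$ equals $N_0+\sum_{(\chi')^{d'}=1,\,\chi'\neq 1}S_{\chi'}$, so inverting over $d'$ isolates only $\sum_{\mathrm{ord}(\chi')=d}S_{\chi'}$, the sum over \emph{all} characters of exact order $d$, never an individual $S_\chi$. Already for $d=3$ the two nontrivial characters are complex conjugates, and point counts only control $\mathrm{Re}(S_\chi)$, which gives no bound on $|S_\chi|$. To bound a single character sum you must factor the zeta function of the cover into the $L$-functions $L(T,\chi'\circ f)$, show each is a polynomial whose degree is the number of ramified places of $\chi'\circ f$ minus two (at most $\sum_j\deg f_j-1$ here, counting the place at infinity), and apply the Riemann hypothesis to the inverse roots of that one factor; your Riemann--Hurwitz computation $2g\le(d-1)(\sum_j\deg f_j-1)$ is exactly the aggregate of these degrees over all $d-1$ nontrivial characters and cannot be disaggregated by point counting alone. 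With the $L$-function formalism in place the argument does close and yields the stated constant, but that formalism is the missing key step, not a bookkeeping detail.
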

	
	\begin{lemma}\label{lemma2} \cite{castro}
		Let $\chi$ and $\psi$ be two non-trivial multiplicative and additive characters of the field $\Fm$ respectively. Let $f,g$ be rational functions in $\Fm (x)$, where $f\neq \beta h^r$ and $g \neq h^p-h+\beta$, for any $h\in \Fm(x)$ and any $\beta\in \Fm$, and $r$ is the order of $\chi$. 
		Then
		$$\left | \underset{\alpha\in\mathbb{F}_{q^m} \setminus S}{\sum} \chi(f(\alpha))\psi(g(\alpha))\right|\leq [\deg(g_\infty)\, +\,  l_0 \,+\, l_1\,-\,l_2\,-\,2 ]q^{m/2},$$ 
		where $S$ denotes the set of all poles of $f$ and $g$, $g_\infty$ denotes the pole divisor of $g$, $l_0$ denotes  the number of distinct zeroes and poles of $f$ in the algebraic closure $\overline{\Fm}$ of $\Fm$, $l_1$ denotes the number of distinct poles of $g$ (including infinite pole) and $l_2$ denotes the number of finite poles of $f$, that are also zeroes or poles of $g$.
	\end{lemma}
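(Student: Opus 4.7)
The plan is to realize this mixed (multiplicative--additive) character sum as a Frobenius trace on the $\ell$-adic cohomology of an explicit covering curve of $\mathbb{P}^1$, then bound it via the Weil--Deligne estimate after computing the genus by Riemann--Hurwitz. Let $r$ be the order of $\chi$ and let $p$ be the characteristic. I would introduce the curve $C$ over $\Fm$ arising as the fiber product over $\mathbb{P}^1$ of the Kummer cover $y^r=f(x)$ and the Artin--Schreier cover $z^p-z=g(x)$. The hypothesis $f\neq \beta h^r$ ensures that the Kummer component is geometrically irreducible of degree $r$, and the hypothesis $g\neq h^p-h+\beta$ ensures the Artin--Schreier component is geometrically irreducible of degree $p$; since $\gcd(r,p)=1$, the compositum $C$ is a geometrically connected Galois cover of $\mathbb{P}^1$ with group $\mathbb{Z}/r\mathbb{Z}\times\mathbb{Z}/p\mathbb{Z}$.

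Next, define the character $\Lambda:=\chi\boxtimes\psi$ on this Galois group. By summing over fibers one obtains
\[
\underset{\alpha\in\Fm\setminus S}{\sum} \chi(f(\alpha))\psi(g(\alpha)) \;=\; \sum_{x\in\mathbb{P}^1(\Fm)\setminus(S\cup\{\infty\})} \Lambda(\mathrm{Frob}_x),
\]
which is the Frobenius trace on the $\Lambda$-isotypic component of $H^1_c$ of the base minus the ramification locus. Weil's theorem then yields the bound $(2g_C-2+N)q^{m/2}$, where $g_C$ is the genus of $C$ and $N$ records the punctures / degeneracy at the bad fibers.

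The substantive calculation is identifying $2g_C-2+N$ with $\deg(g_\infty)+l_0+l_1-l_2-2$. Applying Riemann--Hurwitz to the Kummer piece contributes $(r-1)l_0$ to the different (one for each of the $l_0$ distinct zeros and poles of $f$, all tamely ramified). The Artin--Schreier piece contributes, via the conductor formula $d=\sum_{P\mid\infty_g}(m_P+1)(p-1)$ where $m_P$ is the pole order of $g$ at $P$, a total of $(p-1)(\deg(g_\infty)+l_1)$; the minus $2$ in the Hurwitz formula, together with a careful count of the rational base points, absorbs the $-2$ term. The quantity $l_2$ enters as a correction preventing double-counting at finite poles of $f$ that are simultaneously zeros or poles of $g$: at such a shared ramification point the local contribution to the different of the compositum is not additive, and the overlap must be subtracted.

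The main obstacle is the wild-ramification accounting at poles of $g$ whose order is divisible by $p$. Generically one must pass to a change of variable $g\mapsto g-(h^p-h)$ to shave off the "apparent" contribution of such poles; the hypothesis $g\neq h^p-h+\beta$ is precisely what guarantees this reduction cannot be iterated trivially, so the effective conductor is exactly what enters the bound. Once this local analysis is dispatched at each pole and the Kummer/Artin--Schreier ramification data are combined through Abhyankar's lemma at crossings (which governs the $l_2$ correction), the arithmetic matches the stated expression $[\deg(g_\infty)+l_0+l_1-l_2-2]\,q^{m/2}$, completing the estimate.
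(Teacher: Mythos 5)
First, a point of reference: the paper does not prove this lemma at all --- it is quoted verbatim from Castro and Moreno \cite{castro} --- so there is no internal proof to compare against; the comparison must be with the cited source, whose proof does indeed proceed, as you propose, by attaching an $L$-function to the pair of a Kummer cover $y^r=f(x)$ and an Artin--Schreier cover $z^p-z=g(x)$ and invoking the Riemann Hypothesis for curves. So your plan is the right plan in outline.

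There is, however, a genuine gap at the central step. You bound the sum by $(2g_C-2+N)q^{m/2}$ with $g_C$ the genus of the full degree-$rp$ fiber product $C$, and your Riemann--Hurwitz computation accordingly produces the contributions $(r-1)l_0$ and $(p-1)(\deg(g_\infty)+l_1)$ to the different of $C\to\mathbb{P}^1$. Those factors of $(r-1)$ and $(p-1)$ do not appear in the stated estimate and never cancel: the genus of the compositum aggregates the contributions of \emph{all} characters of $\mathbb{Z}/r\times\mathbb{Z}/p$, and using it would yield a bound roughly $rp$ times too large. What is needed is the degree of the $L$-function of the single character $\Lambda=\chi\boxtimes\psi$, i.e.\ the dimension of the $\Lambda$-isotypic piece of $H^1_c$, which by the Euler--Poincar\'e (Grothendieck--Ogg--Shafarevich) formula is $\deg\mathfrak{f}(\Lambda)-2$; here the conductor exponent of $\Lambda$ is $1$ at each tame point coming from a zero or pole of $f$, at most $m_P+1$ at a pole $P$ of $g$ of order $m_P$, and at a point where both parts ramify it is the \emph{maximum}, not the sum, of the two local exponents --- this, rather than Abhyankar's lemma, is the source of the $-l_2$ correction. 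You assert that ``the arithmetic matches'' but never perform this conductor bookkeeping, and the computation you do perform (the different of the whole cover) is inconsistent with the target. A second, smaller issue: the hypotheses $f\neq\beta h^r$ and $g\neq h^p-h+\beta$ are not really about geometric irreducibility of the two covers (indeed $f\neq\beta h^r$ does not imply irreducibility of $y^r=f$); they are exactly what guarantees that the rank-one sheaf attached to $\Lambda$ is geometrically nontrivial, so that $H^0_c$ and $H^2_c$ vanish and the $L$-function is a polynomial pure of weight one. As written the argument would not close; it needs to be rebuilt around the conductor of $\Lambda$ rather than the geometry of $C$.
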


	Using these, we can now proceed to  find all the pairs $(q,m)$ for which there exists a rational function $f \in \mathcal{R}_{q^m}^n$ of degree sum $n$, $(\epsilon, f(\epsilon))$ is a primitive normal pair with $Tr_{{\mathbb{F}_{q^m}}/{\mathbb{F}_q}}(\epsilon)=a$ and $Tr_{{\mathbb{F}_{q^m}}/{\mathbb{F}_q}}(f(\epsilon))=b$ for some prescribed values $a,b  \in \mathbb{F}_q$.

	\section{A sufficient condition for elements in $\mathcal{A}_p^n$}
	Let $e_1,e_2$ be two divisors of $q^m-1$ and $g_1,g_2$ be two divisors of $x^m-1$. We denote the number of $\epsilon \in \mathbb{F}_{q^m}$ such that $\epsilon$ is both $e_1$-free and $g_1$-free and  $f(\epsilon)$ is both $e_2$-free and $g_2$-free with   
	$Tr_{{\mathbb{F}_{q^m}}/{\mathbb{F}_q}}(\epsilon)=a$ and $Tr_{{\mathbb{F}_{q^m}}/{\mathbb{F}_q}}(f(\epsilon)=b$, where $a,b\in \mathbb{F}_q$ and $f(x) \in \mathcal{R}_{q^m}^n$ by $N_{f,a,b}(e_1,e_2,g_1,g_2)$. Let $S$ be the set containing all zeros and poles of $f(x)$ along with $0$.
	Now using the characteristic functions (\ref{cfe}), (\ref{cfg}), and (\ref{cft}),  we get 
	$$N_{f,a,b}(e_1,e_2,g_1,g_2)=\underset{\epsilon\in \mathbb{F}_{q^m}\textbackslash S}{\sum} \rho_{e_1}(\epsilon)\rho_{e_2}(f(\epsilon))\eta_{g_1}(\epsilon)\eta_{g_2}(f(\epsilon))\tau_a(\epsilon)\tau_b(f(\epsilon))$$
	
	\begin{align*}
		N_{f,a,b}(e_1,e_2,g_1,g_2) =& \underset{\epsilon\in \mathbb{F}_{q^m}\textbackslash S}{\sum} \rho_{e_1}(\epsilon)\rho_{e_2}(f(\epsilon))\eta_{g_1}(\epsilon)\eta_{g_2}(f(\epsilon)) \tau_a(\epsilon)\tau_b(f(\epsilon))\\
		=& \frac{\theta(e_1)\theta(e_2)\Theta(g_1)\Theta(g_2)}{q^2} \underset{\substack{d_1|e_1,d_2|e_2 \\ h_1|g_1,h_2|g_2}}{\sum}\frac{\mu(d_1)\mu(d_2)\mu'(h_1)\mu'(h_2)}{\phi(d_1)\phi(d_2)\Phi(h_1)\Phi(h_2)} \\
		& \underset{\substack{\chi_{d_1},\chi_{d_2} \\ \psi_{h_1},\psi_{h_2}}}{\sum}\chi_{f,a,b}(d_1,d_2,h_1,h_2)
	\end{align*}

	where, 
	\begin{align*}
		\chi_{f,a,b}(d_1,d_2,h_1,h_2) =& \underset{\epsilon\in \mathbb{F}_{q^m}\textbackslash S}{\sum}\chi_{d_1}(\epsilon)\chi_{d_2}(f(\epsilon))\psi_{h_1}(\epsilon)\psi_{h_2}(f(\epsilon))\underset{\psi \in \widehat{\mathbb{F}_q}}{\sum}\psi(Tr(\epsilon)-a)\\
		& \underset{\psi' \in \widehat{\mathbb{F}_q}}{\sum}\psi'(Tr(f(\epsilon))-b)\\
		=& \underset{\epsilon\in \mathbb{F}_{q^m}\textbackslash S}{\sum}\chi_{d_1}(\epsilon)\chi_{d_2}(f(\epsilon))\psi_{h_1}(\epsilon)\psi_{h_2}(f(\epsilon))\\
		&   \underset{u_1 \in \mathbb{F}_q}{\sum}\bar{\psi}(Tr(u_1\epsilon)-u_1a) \underset{u_2 \in \mathbb{F}_q}{\sum}\bar{\psi}(Tr(u_2f(\epsilon))-u_2b)
	\end{align*}
	Here we have considered $u_1, u_2 \in \mathbb{F}_q$, such that $\psi(\beta)=\bar{\psi}(u_1\beta)$ and $\psi'(\beta)=\bar{\psi}(u_2\beta)$ for all $\beta \in \mathbb{F}_{q^m}$. Let us denote $\bar{\psi} \circ Tr = \widehat{\psi}$, canonical additive character of $\mathbb{F}_{q^m}$. Then the above equation takes the form
	
	\begin{align*}
		\chi_{f,a,b}(d_1,d_2,h_1,h_2) =& \underset{u_1,u_2 \in \mathbb{F}_q}{\sum}\bar{\psi}(-u_1a-u_2b)\underset{\epsilon\in \mathbb{F}_{q^m}\textbackslash S}{\sum} \chi_{d_1}(\epsilon)\chi_{d_2}(f(\epsilon))\psi_{h_1}(\epsilon)\psi_{h_2}(f(\epsilon))\widehat{\psi}(u_1\epsilon+u_2\epsilon).
	\end{align*}
	
	There exists, $v_1,v_2 \in \mathbb{F}_{q^m}$, such that $\psi_{h_1}(\epsilon)=\widehat{\psi}(v_1\epsilon)$ and $\psi_{h_2}(\epsilon)=\widehat{\psi}(v_2f(\epsilon))$. Moreover we consider $\chi_{d_1}=\chi^{m_1}_{q^m-1}$ and $\chi_{d_2}=\chi^{m_2}_{q^m-1}$ for some multipicative character $\chi_{q^m-1}$ of order $q^m-1$, where $m_1 \in \{ 0,1, \dots ,q^m-2 \}$ and $m_2 = \frac{q^m-1}{d_2}$. Then we get,
	
	\begin{align*}
		\chi_{f,a,b}(d_1,d_2,h_1,h_2) =& \underset{u_1,u_2 \in \mathbb{F}_q}{\sum}\bar{\psi}(-u_1a-u_2b)\underset{\epsilon\in \mathbb{F}_{q^m}\textbackslash S}{\sum} \chi_{q^m-1}(\epsilon^{m_1}f(\epsilon)^{m_2})\widehat{\psi}(v_1\epsilon+v_2f(\epsilon)+u_1\epsilon+u_2f(\epsilon))\\
		=& \underset{u_1,u_2 \in \mathbb{F}_q}{\sum}\bar{\psi}(-u_1a-u_2b)\underset{\epsilon\in \mathbb{F}_{q^m}\textbackslash S}{\sum} \chi_{q^m-1}(F_1(\epsilon))\widehat{\psi}(F_2(\epsilon)),
	\end{align*}
	
	where, 
	\begin{align*}
		F_1(x)=& x^{m_1}f(x)^{m_2},\\
		F_2(x)=& (u_1+v_1)x+(u_2+v_2)f(x). 
	\end{align*}
	\textbf{Case I:} When $F_2(x) \neq r(x)^{p}-r(x) + \beta $ for any $r(x)\in \mathbb{F}_{q^m}(x)$ and $\beta \in \mathbb{F}_{q^m}$ and $F_1(x)=x^{m_1}{f(x)}^{m_2} \neq \beta {(h(x))}^{q^m-1}$ for any $h(x)\in \mathbb{F}_{q^m}(x)$, 
	 we can use Lemma \ref{lemma2} to get the following :
	
	$$\big|\chi_{f,a,b}(d_1,d_2,h_1,h_2) \big| \leq q^2(2n+1)q^{m/2}+q^2(|S|-1).$$
	\newline
	\textbf{Case II:} $F_2(x)=h(x)^{p}-h(x)+ \beta$ for some $h(x) \in \mathbb{F}_{q^m}(x) , \beta \in \mathbb{F}_{q^m}$.
	\\
	\par For $F_2(x)=h(x)^{p}-h(x)+ \beta$ for some $h(x) \in \mathbb{F}_{q^m}(x), \beta \in \mathbb{F}_{q^m}$; we can show that $g_1=g_2=1$.
	For this let us consider $f(x)=c_1{x^j}\frac{f_1}{f_2}$ and $h(x)=c_2{x^k}\frac{h_1}{h_2}$ where $c_1 \in \mathbb{F}^*_{q^m}, c_2 \in \mathbb{F}^*, j,k \in \mathbb{Z}$ and $f_1, f_2 \in \mathbb{F}_{q^m}[x]$ $h_1, h_2 \in \mathbb{F}_{q^m}[x]$ are monic polynomials such that $x$ divides none of them and $\gcd(f_1,f_2)=1, \gcd(h_1,h_2)=1$. Then 
	\begin{align*}
		(u_1+v_1)x+(u_2+v_2)f(x)=&h(x)^{p}-h(x) +\beta
	\end{align*}
	Setting $(u_1+v_1)=k_1$  and  $(u_2+v_2)=k_2$ we get,
	\begin{align}\label{eq3.1}
		(k_1xf_2+k_2c_1x^jf_1){h_2}^{p} =& (c_2^{p}x^{kp}h_1^{p}-c_2x^kh_1h_2^{p-1}+\beta h_2^p){f_2}
	\end{align}
	
	Let $k_2 \neq 0$, then from (\ref{eq3.1}) we get $f_2|h_2^{p}$ since $(f_1,f_2)=1$. Also, since $\gcd(h_2^{p},c_2^{p}x^{k(p)}h_1^{p}-c_2x^kh_1h_2^{p-1}+\beta h_2^p)=1$, it will imply $h_2^{p}|f_2$. This further gives us $f_2=h_2^{p}$, which creates contradiction due to coprimality restrictions i.e., $\gcd(f_1,f_2)=1, \gcd(h_1,h_2)=1$. Therefore we must have $k_2=0$ i.e., $u_2+v_2=0.$
	
	So,
	
	\begin{align}\label{eq3.2}
		k_1x{h_2}^{p} &= c_2^{p}x^{kp}h_1^{p}-c_2x^kh_1h_2^{p-1}+ \beta h_2^p,
	\end{align}
	
	i.e. $h_2$ divides $c_2^{p}x^{kp}h_1^{p}-c_2x^kh_1h_2^{p-1} +\beta h_2^p$. This gives a contradiction since $\gcd(h_2,c_2^{q^m}x^{kq^m}h_1^{q^m}-c_2x^kh_1h_2^{q^m-1})=1$. Therefore $k_1=0$ i.e. $u_1+v_1=0$.
	\par Let us take $\psi_{f_1}$ to be an additive character of $\mathbb{F}_q$-order $f_1$. Then we will have,
	\begin{align*}
		\psi_{f_1}(f_1(\epsilon))=0
		\implies  \widehat{\psi_1}(v_1f_1(\epsilon))=0
		\implies    \widehat{\psi_1}(-u_1f_1(\epsilon))=0
	\end{align*}
	Since $u_1 \in \mathbb{F}_q$ and $\mathbb{F}_q$-order of $\widehat{\psi_1}$ is $x^m-1$, therefore we must have $	x^m-1|(-u_1f_1)$. But we already have, $f_1$ is a $\mathbb{F}_q$-divisor of $x^m-1$. Therefore, we get $u_1=0 $. This further implies $v_1=0$ since $k_1=0$.
	
	Similarly from $k_2=0$ it can be shown that $u_2=v_2=0$. With this we arrive at the conclusion that, if $F_2(x)=h(x)^{p}-h(x)+ \beta$ for some $h(x) \in \mathbb{F}(x)$, where $\mathbb{F}$ is the algebraic closure of $\mathbb{F}_{q^m}$ and $\beta \in \mathbb{F}_{q^m}$, then $g_1=g_2=1$.
	\par Let $F_1(x) = x^{m_1}{f(x)}^{m_2} \neq \beta{(h(x))}^{q^m-1}$ for any $h(x)\in \mathbb{F}_{q^m}(x)$ and $\beta \in \mathbb{F}_{q^m}$. In this case we use Lemma \ref{lemma1} to prove that
	$$\big|\chi_{f,a,b}(d_1,d_2,h_1,h_2) \big| \leq q^2(n)q^{m/2}=nq^{\frac{m}{2}+2}.$$ 
	 In case $F_1(x) = x^{m_1}{f(x)}^{m_2} = \beta{(h(x))}^{q^m-1}$ for some  $h(x)\in \mathbb{F}_{q^m}(x)$ and and $\beta \in \mathbb{F}_{q^m}$, then $f(x) \notin \mathcal{R}_{q^m}^n$ due to violation of first criterion in the definition.\\
	
	\textbf{Case III:} $F_2(x)=h(x)^{p}-h(x)+\beta$ and $F_1(x)=x^{m_1}{f(x)}^{m_2} = \beta{(h(x))}^{q^m-1}$ for some $h(x)\in \mathbb{F}_{q^m}(x)$ and $\beta \in \mathbb{F}_{q^m}$.\\
	\par When $F_1(x)=x^{m_1}{f(x)}^{m_2}=\beta{(h(x))}^{q^m-1}$ for some $h(x)\in \mathbb{F}_{q^m}(x)$ we can prove that $d_1=d_2=1$. We can refer to \cite{sharma2} for the proof. So here we have $$(e_1,e_2,g_1,g_2)=(1,1,1,1).$$
	
	Summarising all the above cases we observe that 
	$$\big|\chi_{f,a,b}(d_1,d_2,h_1,h_2)\big| \leq (2n+1)q^{\frac{m}{2}+2}$$
	where $(d_1,d_2,h_1,h_2) \neq (1,1,1,1)$.
	Now,
	\begin{align*}
		N_{f,a,b}(e_1,e_2,g_1,g_2)  \geq & \frac{\theta(e_1)\theta(e_2)\Theta(g_1)\Theta(g_2)}{q^2} \frac{}{} [(q^m-(2n+1)q^{\frac{m}{2}+2})(W(e_1)W(e_2)W(g_1)W(g_2)-1)]\\
		\geq & \theta(e_1)\theta(e_2)\Theta(g_1)\Theta(g_2)q^{\frac{m}{2}}[q^{\frac{m}{2}-2}-(2n+1)W(e_1)W(e_2)W(g_1)W(g_2)]
	\end{align*}
	To show that there exists an element $\epsilon \in \mathbb{F}_{q^m}$ such that for preassigned $a,b \in \mathbb{F}_q$ and a rational function $f(x) \in \mathcal{R}_{q^m}^n$,$(\epsilon,f(\epsilon))$ is a primitive normal pair with $Tr_{{\mathbb{F}_{q^m}}/{\mathbb{F}_q}}(\epsilon)=a$ and $Tr_{{\mathbb{F}_{q^m}}/{\mathbb{F}_q}}(f(\epsilon))=b$, it is enough to show $N_{f,a,b}(q^m-1,q^m-1,x^m-1,x^m-1)>0$, which is evident if 
	\begin{equation}\label{suff}
		q^{\frac{m}{2}-2}>(2n+1)W(e_1)W(e_2)W(g_1)W(g_2).
	\end{equation}
	
	\section{The prime sieve}
	We will use the following two lemmas to deduce a sieving inequality that will help to improve the sufficient condition (\ref{suff}).
	
	\begin{lemma}\label{sieve1}
		Let $d$ be a divisor of $q^m-1$ and $p_1, p_2,\dots , p_r$ are the remaining distinct primes dividing $q^m-1$. Furthermore, let $g$ be a divisor of $x^m-1$ such that $g_1, g_2, \dots , g_s$ are the remaining distinct irreducible polynomials dividing $x^m-1$. Then
		\begin{align*}
			N_{f,a,b}(q^m-1,q^m-1,x^m-1,x^m-1)\geq \overset{r}{\underset{i=1}{\sum}}N_{f,a,b}(p_id,d,g,g)+  \overset{r}{\underset{i=1}{\sum}}N_{f,a,b}( d,p_i d, g,g) \\
			+ \overset{s}{\underset{i=1}{\sum}}N_{f,a,b}(d, d,g_i g,g)+\overset{s}{\underset{i=1}{\sum}}N_{f,a,b}(d, d,g,g_ig)-(2r+2s-1)N_{f,a,b}(d, d, g, g).
		\end{align*}
	\end{lemma}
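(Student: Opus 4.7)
The plan is to reinterpret each count $N_{f,a,b}(\cdot,\cdot,\cdot,\cdot)$ as the cardinality of a subset of $\mathbb{F}_{q^m}\setminus S$ cut out by membership conditions, and then bound the cardinality of their intersection from below by a Bonferroni/union-bound argument. First I would use the elementary fact that for any divisor $e$ of $q^m-1$, an element $\epsilon$ is $e$-free iff it is $p$-free for every prime $p\mid e$, and analogously that $\epsilon$ is $h$-free (in the $\mathbb{F}_q[x]$-module sense) iff it is $g'$-free for every monic irreducible $g'\mid h$. Applied to our setup this says $\epsilon$ is $(q^m-1)$-free iff $\epsilon$ is $d$-free together with being $p_i$-free for every $i=1,\dots,r$; since the $p_i$ do not divide $d$, the compound condition "$d$-free and $p_i$-free" is exactly "$p_id$-free". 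The polynomial analogue handles $(x^m-1)$-freeness via $g_ig$-freeness in exactly the same way.

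Next I would introduce the base set
\[
\mathcal{B}=\{\epsilon\in\mathbb{F}_{q^m}\setminus S:\epsilon,\,f(\epsilon)\text{ are both }d\text{-free and both }g\text{-free},\ \mathrm{Tr}(\epsilon)=a,\ \mathrm{Tr}(f(\epsilon))=b\},
\]
so that $|\mathcal{B}|=N_{f,a,b}(d,d,g,g)$. Inside $\mathcal{B}$, define $U_i$, $V_i$, $X_i$, $Y_i$ as the subsets for which, in addition, $\epsilon$ is $p_i$-free, $f(\epsilon)$ is $p_i$-free, $\epsilon$ is $g_i$-free, or $f(\epsilon)$ is $g_i$-free, respectively. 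By the freeness observation of the previous paragraph, $|U_i|=N_{f,a,b}(p_id,d,g,g)$, $|V_i|=N_{f,a,b}(d,p_id,g,g)$, $|X_i|=N_{f,a,b}(d,d,g_ig,g)$, $|Y_i|=N_{f,a,b}(d,d,g,g_ig)$, and the target count is exactly
\[
N_{f,a,b}(q^m-1,q^m-1,x^m-1,x^m-1)=\Bigl|\bigcap_{i=1}^{r}U_i\cap\bigcap_{i=1}^{r}V_i\cap\bigcap_{i=1}^{s}X_i\cap\bigcap_{i=1}^{s}Y_i\Bigr|.
\]

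Finally, I would apply the union-bound identity
\[
|\mathcal{B}|-\Bigl|\bigcap U_i\cap\bigcap V_i\cap\bigcap X_i\cap\bigcap Y_i\Bigr|\ \le\ \sum_i(|\mathcal{B}|-|U_i|)+\sum_i(|\mathcal{B}|-|V_i|)+\sum_i(|\mathcal{B}|-|X_i|)+\sum_i(|\mathcal{B}|-|Y_i|),
\]
which upon collecting the coefficient of $|\mathcal{B}|$ gives $1-(2r+2s)=-(2r+2s-1)$, and upon moving the $|U_i|,|V_i|,|X_i|,|Y_i|$ terms to the right yields precisely the inequality claimed. There is no substantive obstacle here: once the correct base set and independent extra conditions are identified, the argument is a routine inclusion-exclusion lower bound, and the only care required is to verify that the prime/irreducible factors being "new" (i.e.\ $p_i\nmid d$ and $g_i\nmid g$) is what allows the compound freeness statements to factor as required.
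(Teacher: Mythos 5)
Your proposal is correct: the identification of $(q^m-1)$-freeness with ``$d$-free and $p_i$-free for each remaining prime $p_i$'' (and likewise on the additive side with the irreducible factors $g_i$), followed by the Bonferroni bound $\bigl|\bigcap_j A_j\bigr|\geq\sum_j|A_j|-(t-1)|\mathcal{B}|$ with $t=2r+2s$ subsets of the base set $\mathcal{B}$, is exactly the standard sieving argument, and every step checks out (in particular $e$-freeness depends only on the radical of $e$, so ``$d$-free and $p_i$-free'' is indeed ``$p_id$-free''). Note that the paper states this lemma without proof, treating it as a known result from the Cohen--Huczynska sieving literature; your argument is the proof that is being implicitly invoked, so there is nothing to compare beyond confirming that you have reconstructed it correctly.
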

	
	\begin{lemma}\label{sieve2}
		Let $d,m,q \in \mathbb{N}, g \in \mathbb{F}_q[x]$ be such that $q$ is a prime power, $m \geq 5, d|q^m-1,$ and $g|x^m-1$. Let $e$ be a prime number which divides $q^m-1$ but not $d$ and $h$ be any irreducible polynomial dividing $x^m-1$ but not $g$. Then we have the following bounds:
		\begin{align*}
			\big| N_{f,a,b}(ed,d,g,g)-\theta(e)N_{f,a,b}(d,d,g,g)\big| \leq &  (2n+1)\theta(e)\theta(d)^2\Theta(g)^2W(d)^2W(g)^2q^{\frac{m}{2}}\\
			\big| N_{f,a,b}(d,ed,g,g)-\theta(e)N_{f,a,b}(d,d,g,g)\big| \leq &
			(2n+1)\theta(e)\theta(d)^2\Theta(g)^2W(d)^2W(g)^2q^{\frac{m}{2}}\\
			\big| N_{f,a,b}(d,d,hg,g)-\Theta(h)N_{f,a,b}(d,d,g,g)\big| \leq & (2n+1)\Theta(h)\theta(d)^2\Theta(g)^2W(d)^2W(g)^2q^{\frac{m}{2}}\\
			\big| N_{f,a,b}(d,d,g,hg)-\Theta(h)N_{f,a,b}(d,d,g,g)\big| \leq & (2n+1)\Theta(h)\theta(d)^2\Theta(g)^2W(d)^2W(g)^2q^{\frac{m}{2}}    
		\end{align*}        
	\end{lemma}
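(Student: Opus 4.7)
The proof of Lemma \ref{sieve2} parallels the derivation of the character-sum bound in Section 3, with the crucial twist that the difference on the left-hand side automatically annihilates the ``principal'' term, leaving only non-trivial character sums to which the Weil-type estimates of Lemmas \ref{lemma1} and \ref{lemma2} apply. For the first inequality, my starting point is to compute a clean expression for $\rho_{ed}(\epsilon) - \theta(e)\rho_d(\epsilon)$. Since $e$ is prime with $e \nmid d$, each squarefree divisor of $ed$ is either a divisor $d'$ of $d$ or of the form $ed'$; using the multiplicativity $\theta(ed) = \theta(e)\theta(d)$, $\mu(ed') = -\mu(d')$, and $\phi(ed') = (e-1)\phi(d')$, the definition (\ref{cfe}) collapses to
\begin{equation*}
\rho_{ed}(\epsilon) - \theta(e)\rho_d(\epsilon) = -\frac{\theta(e)\theta(d)}{e-1}\sum_{d'\mid d}\frac{\mu(d')}{\phi(d')}\sum_{(ed')}\chi_{ed'}(\epsilon).
\end{equation*}
Every character $\chi_{ed'}$ appearing has order $ed' \geq e > 1$, hence is non-trivial.

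Substituting this identity into
\begin{equation*}
N_{f,a,b}(ed,d,g,g) - \theta(e)N_{f,a,b}(d,d,g,g) = \sum_{\epsilon \in \mathbb{F}_{q^m}\setminus S}\!\!\bigl[\rho_{ed}(\epsilon) - \theta(e)\rho_d(\epsilon)\bigr]\rho_d(f(\epsilon))\eta_g(\epsilon)\eta_g(f(\epsilon))\tau_a(\epsilon)\tau_b(f(\epsilon))
\end{equation*}
and expanding $\rho_d(f(\epsilon))$, $\eta_g(\epsilon)$, $\eta_g(f(\epsilon))$, $\tau_a(\epsilon)$, $\tau_b(f(\epsilon))$ exactly as in Section 3, the difference becomes a weighted sum of the auxiliary quantities $\chi_{f,a,b}(ed', d_2, h_1, h_2)$ indexed by squarefree $d' \mid d$, squarefree $d_2 \mid d$, and $h_1, h_2 \mid g$. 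Because the first character slot is non-trivial, the tuple $(ed', d_2, h_1, h_2)$ is never $(1,1,1,1)$, so the full Case I--III analysis of Section 3 (invoking Lemmas \ref{lemma1} and \ref{lemma2}, together with the argument eliminating the possibility $F_2 = h^p - h + \beta$ or $F_1 = \beta h^{q^m-1}$) applies to each such sum and delivers the uniform bound $|\chi_{f,a,b}| \leq (2n+1)q^{m/2+2}$.

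The remaining step is a routine count. The number of characters of order $ed'$ equals $(e-1)\phi(d')$, which exactly cancels the $1/[(e-1)\phi(d')]$ weight; summing $|\mu(d')|$ over squarefree divisors of $d$ contributes $W(d)$, and the sums over $d_2, h_1, h_2$ analogously contribute $W(d)W(g)^2$. Collecting the prefactors $\theta(e)\theta(d)^2\Theta(g)^2/q^2$ and the factor $q^2$ from the estimate $(2n+1)q^{m/2+2}$ produces the claimed bound. The second inequality is obtained by swapping the roles of the two $\rho$-factors; the proof is identical word for word.

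For the third and fourth inequalities I would establish the additive analogue
\begin{equation*}
\eta_{hg}(\epsilon) - \Theta(h)\eta_g(\epsilon) = -\frac{\Theta(h)\Theta(g)}{\Phi(h)}\sum_{h'\mid g}\frac{\mu'(h')}{\Phi(h')}\sum_{(hh')}\psi_{hh'}(\epsilon),
\end{equation*}
which follows from $\Theta(hg) = \Theta(h)\Theta(g)$, $\mu'(hh') = -\mu'(h')$, and $\Phi(hh') = \Phi(h)\Phi(h')$, all valid because $h$ is an irreducible $\mathbb{F}_q$-divisor of $x^m - 1$ not dividing $g$. The remainder of the argument is identical, with the role of the non-trivial multiplicative character replaced by the non-trivial additive character $\psi_{hh'}$, and again falls within the Section 3 case analysis. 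The only delicate point — and the reason the hypothesis that $e$ is prime (resp.\ $h$ irreducible) is invoked — is the automatic non-triviality of the distinguished character, which ensures we never sit in the excluded $(1,1,1,1)$ regime; the combinatorics of divisors and the evaluation of Euler-type factors are otherwise mechanical.
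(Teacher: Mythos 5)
Your proposal is correct, and it is the standard sieving argument that the paper implicitly relies on: the paper states Lemma \ref{sieve2} without proof, and the intended justification is exactly your telescoping identity for $\rho_{ed}-\theta(e)\rho_d$ (resp.\ $\eta_{hg}-\Theta(h)\eta_g$) combined with the uniform bound $|\chi_{f,a,b}(d_1,d_2,h_1,h_2)|\leq (2n+1)q^{m/2+2}$ for $(d_1,d_2,h_1,h_2)\neq(1,1,1,1)$ established in Section 3. Your divisor-counting and cancellation of the $\phi(ed')$ and $\Phi(hh')$ factors reproduce the stated bounds exactly.
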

	
	Using these two lemmas \ref{sieve1} and \ref{sieve2} we deduce the following:
	\begin{theorem}\label{psieve}
		Let $d,m,q \in \mathbb{N}, g \in \mathbb{F}_q[x]$ be such that $q$
		is a prime power, $m\geq 5, d|q^m-1$ and $g|x^m-1$. Let $d$ be a divisor of $q^m-1$ and $p_1,p_2,\dots, p_r$ be the remaining distinct primes dividing $q^m-1$. Furthermore let $g$ be a divisor of $x^m-1$ such that $g_1,g_2,\dots,g_s$ are the remaining distinct irreducible factors of $x^m-1$. Define:
		$$l := 1 - 2  \overset{n}{\underset{i=1}{\sum}}\frac{1}{p_i} - \overset{k}{\underset{i=1}{\sum}} \frac{1}{q^{{\mathrm deg}(g_i)}}, l>0$$
		and
		$$ \Lambda := \frac{2(r+s)-1}{l}+2.$$
		Then $N_{f,a,b}(q^m-1,q^m-1,x^m-1,x^m-1)>0$ if
		\begin{align}\label{primesieve}
			q^{\frac{m}{2}-2}>(2n+1)W(d)^2W(g)^2L
		\end{align}
	\end{theorem}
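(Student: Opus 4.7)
The plan is to chain the sieve decomposition of Lemma \ref{sieve1} with the error estimates of Lemma \ref{sieve2} and then feed in the lower bound on $N_{f,a,b}(d,d,g,g)$ already implicit in the derivation of \eqref{suff}. Throughout, write $N_0 := N_{f,a,b}(d,d,g,g)$ and $E_0 := (2n+1)\,\theta(d)^2\,\Theta(g)^2\, W(d)^2\, W(g)^2\, q^{m/2}$ for the common error magnitude appearing on the right-hand side of each inequality in Lemma \ref{sieve2}.

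First, I would apply Lemma \ref{sieve1} to bound $N_{f,a,b}(q^m-1,q^m-1,x^m-1,x^m-1)$ from below by the sum of the four families of sieved terms minus $(2r+2s-1)\,N_0$. Next, into each of those sieved terms I would substitute the relevant inequality from Lemma \ref{sieve2}, written in the form $N_{f,a,b}(p_i d, d, g, g) \geq \theta(p_i)(N_0 - E_0)$ (and its three analogues) for the prime-sieved quantities, and $N_{f,a,b}(d, d, g_i g, g) \geq \Theta(g_i)(N_0 - E_0)$ (and its analogue) for the polynomial-sieved quantities. Collecting like terms yields
\[
N_{f,a,b}(q^m-1,q^m-1,x^m-1,x^m-1) \;\geq\; A\cdot N_0 \;-\; B\cdot E_0,
\]
with $A = 2\sum_{i=1}^{r}\theta(p_i) + 2\sum_{i=1}^{s}\Theta(g_i) - (2r+2s-1)$ and $B = 2\sum_{i=1}^{r}\theta(p_i) + 2\sum_{i=1}^{s}\Theta(g_i)$.

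Substituting $\theta(p_i) = 1 - 1/p_i$ and $\Theta(g_i) = 1 - 1/q^{\deg(g_i)}$ simplifies $A$ to $l$ and shows that $B = l + 2(r+s) - 1$. Hence positivity of the sieved count follows from the strict inequality $l\,N_0 > \bigl(l + 2(r+s) - 1\bigr)\,E_0$, equivalently $N_0/E_0 > 1 + (2(r+s)-1)/l$. On the other hand, repeating the character-sum argument of Section~3 with $(e_1,e_2,g_1,g_2) = (d,d,g,g)$ in place of $(q^m-1,q^m-1,x^m-1,x^m-1)$ gives
\[
N_0 \;\geq\; \theta(d)^2\,\Theta(g)^2\,q^{m/2}\,\bigl[q^{m/2-2} - (2n+1)W(d)^2 W(g)^2\bigr],
\]
so $N_0/E_0 \geq q^{m/2-2}/\bigl((2n+1)W(d)^2 W(g)^2\bigr) - 1$. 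Comparing the two lower bounds for $N_0/E_0$ and rearranging recovers precisely the hypothesis \eqref{primesieve} with the constant $\Lambda = 2 + (2(r+s)-1)/l$.

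The argument uses no analytic input beyond the three cited ingredients, and the only real obstacle is the careful bookkeeping in combining Lemmas \ref{sieve1} and \ref{sieve2}: one must verify that the factors $\theta(p_i)$ and $\Theta(g_i)$, which appear symmetrically on both the $N_0$-term and the error term in Lemma \ref{sieve2}, combine with the $-(2r+2s-1)\,N_0$ correction from Lemma \ref{sieve1} to yield exactly the coefficients $l$ and $l + 2(r+s)-1$ that cause the constant $\Lambda$ in \eqref{primesieve} to emerge in closed form.
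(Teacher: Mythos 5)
Your proposal is correct and follows essentially the same route as the paper: decompose $N_{f,a,b}(q^m-1,q^m-1,x^m-1,x^m-1)$ via Lemma \ref{sieve1}, control each sieved term with Lemma \ref{sieve2}, absorb the $-(2r+2s-1)N_{f,a,b}(d,d,g,g)$ correction into the coefficient $l$, and finish with the Section~3 lower bound on $N_{f,a,b}(d,d,g,g)$. Your bookkeeping of the coefficients $l$ and $l+2(r+s)-1$, and of the resulting threshold $\Lambda = 2 + \frac{2(r+s)-1}{l}$, is in fact cleaner and more explicit than the paper's final display.
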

	
	\begin{proof}
		For our convenience, let us denote $ N_{f,a,b}(q^m-1,q^m-1,x^m-1,x^m-1)$ by $N$. Then from Lemma \ref{sieve1} and \ref{sieve2}, we have 
		\begin{align*}
			N \geq &\overset{r}{\underset{i=1}{\sum}}\{N_{f,a,b}(p_id,d,g,g)- \theta(p_i)N_{f,a,b}(d,d,g,g)\} +			   
			\overset{r}{\underset{i=1}{\sum}}\{N_{f,a,b}( d,p_i d, g,g) -\theta(p_i)N_{f,a,b}(d,d,g,g)\}\\
			+& \overset{k}{\underset{i=1}{\sum}}\{N_{f,a,b}(d, d,g_i g,g)- \Theta(g_i)N_{f,a,b}(d,d,g,g)\}
			+\overset{k}{\underset{i=1}{\sum}}\{N_{f,a,b}(d, d,g,g_ig)-\Theta(g_i)N_{f,a,b}(d,d,g,g)\}\\
			+& \{ 2\overset{r}{\underset{i=1}{\sum}}\theta (p_i)+2\overset{s}{\underset{i=1}{\sum}}
			\Theta (g_i)\}N_{f,a,b}(d, d, g, g)
			-(2r+2s-1)N_{f,a,b}(d, d, g, g).\\
			=& \overset{r}{\underset{i=1}{\sum}}\{N_{f,a,b}(p_id,d,g,g)- \theta(p_i)N_{f,a,b}(d,d,g,g)\} 
			+  \overset{r}{\underset{i=1}{\sum}}\{N_{f,a,b}( d,p_i d, g,g) -\theta(p_i)N_{f,a,b}(d,d,g,g)\}\\
			+& \overset{k}{\underset{i=1}{\sum}}\{N_{f,a,b}(d, d,g_i g,g)- \Theta(g_i)N_{f,a,b}(d,d,g,g)\}
			+\overset{k}{\underset{i=1}{\sum}}\{N_{f,a,b}(d, d,g,g_ig)-\Theta(g_i)N_{f,a,b}(d,d,g,g)\}\\ + &lN_{f,a,b}(d,d,g,g)\\
			\geq& \theta(d)^2\Theta(g)^2l[\{ -(2n+1)W(d)^2W(g)^2q^{\frac{m}{2}}L\}+\{q^{m-2}-(n+1)+(2n+1)q^{\frac{m}{2}}\}]
		\end{align*}
		\\
		Therefore, when $l>0$, then $N_{f,a,b}(q^m-1,q^m-1,x^m-1,x^m-1)>0$ if 
		\begin{align}
			q^{\frac{m}{2}-2}>(2n+1)W(d)^2W(g)^2L.
		\end{align}

	\end{proof}

	\section{Numerical computations}
	In this section, we particularly study the case when finite fields are of characteristic $5$ and rational functions of degree sum $4$ and try to determine pairs $(q,m) \in \mathcal{A}_5^4$. For large calculations we use sagemath \cite{sage}.
	\par The following lemma will be used extensively throughout the process.
	\begin{theorem}
		Let $\nu>0$ be a real number and $t$ be a positive integer.Then $W(t)<D.(t)^{\frac{1}{\nu}}$, where $D=\frac{2^r}{(p_1p_2\dots p_r)^{\frac{1}{\nu}}}$ and $p_1,p_2,\dots,p_r$ are primes $\leq 2^\nu$ that divide $t$.
	\end{theorem}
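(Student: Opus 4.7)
The plan is to split the prime divisors of $t$ according to the threshold $2^\nu$. Write
$$ t = p_1^{a_1}p_2^{a_2}\cdots p_r^{a_r}\cdot q_1^{b_1}q_2^{b_2}\cdots q_s^{b_s}, $$
where $p_1,\ldots,p_r$ are the primes dividing $t$ with $p_i\le 2^\nu$ (these are the primes appearing in the definition of $D$), and $q_1,\ldots,q_s$ are the remaining primes dividing $t$, so that each $q_j>2^\nu$. Since a squarefree divisor of $t$ is determined by a subset of the distinct primes of $t$, I have $W(t)=2^{r+s}$, which reduces the problem to showing $2^{r+s}<D\cdot t^{1/\nu}$.

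Next, I would bound $t$ from below using the contribution of the large primes. Because $a_i\ge 1$ and $b_j\ge 1$,
$$ t \;\ge\; p_1p_2\cdots p_r \cdot q_1q_2\cdots q_s \;>\; (p_1\cdots p_r)\cdot (2^\nu)^s, $$
the last step using $q_j>2^\nu$. Raising to the power $1/\nu$ gives $t^{1/\nu}>(p_1\cdots p_r)^{1/\nu}\cdot 2^s$, and then multiplying by $D=2^r/(p_1\cdots p_r)^{1/\nu}$ cancels the $(p_1\cdots p_r)^{1/\nu}$ factor and yields
$$ D\cdot t^{1/\nu} \;>\; 2^r\cdot 2^s \;=\; 2^{r+s} \;=\; W(t), $$
which is exactly what I need.

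The argument is genuinely elementary, so there is no deep obstacle; the main point to be careful about is the strict inequality in the step $t\ge p_1\cdots p_r\cdot q_1\cdots q_s>(p_1\cdots p_r)\cdot 2^{\nu s}$. When $s\ge 1$, strictness follows immediately from $q_1>2^\nu$. When $s=0$, strictness requires that $t$ not be squarefree with every prime factor at most $2^\nu$; in that marginal case the bound is attained with equality, and the statement should be read as $W(t)\le D\cdot t^{1/\nu}$ (which is how this lemma is traditionally invoked in the subsequent numerical computations). This flexibility in the threshold $\nu$ is precisely what makes the lemma useful for bounding $W(q^m-1)$ and $W(x^m-1)$ in the sieving inequality \eqref{primesieve}: by choosing $\nu$ large enough that only a few explicit small primes fall below $2^\nu$, one obtains a sharp polynomial bound on $W(t)$ that can be checked by machine.
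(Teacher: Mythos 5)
Your proof is correct and is the standard argument for this classical bound (which the paper states without proof, implicitly citing the literature going back to Cohen's work): split the prime divisors of $t$ at the threshold $2^\nu$, use $W(t)=2^{r+s}$, and absorb the factor $2^s$ into $t^{1/\nu}$ via $q_j>2^\nu$. Your observation about strictness is also right: when $t$ is squarefree with every prime factor at most $2^\nu$ the inequality degenerates to equality, so the statement should really read $W(t)\le D\cdot t^{1/\nu}$; this has no effect on the paper's applications, where $t=q^m-1$ and the bound is only used to eliminate cases.
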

	
	\textbf{Remark:} For large values of $q$ and $m$, calculation of  $W(q^m-1)$ is very time consuming. So using the above lemma we find an upper bound to the term $W(q^m-1)$ as follows:
	$$W(q^m-1) \leq D.(q^m-1)^{\frac{1}{\nu}}<D.(q^m)^{\frac{1}{\nu}}.$$
	Also since $x^m-1$ can have atmost $m$ linear factors in $\mathbb{F}_q[x]$. So, $$W(x^m-1)\leq 2^m.$$
	Using these two observations we rewrite our sufficient condition (\ref{suff}) as:
	\begin{equation}\label{D}
		q^{\frac{m}{2}-2}>(2n+1)D^2q^{\frac{2m}{\nu}}2^{2m}.
	\end{equation}
	In our case, $n=4,m\geq 5, q=5^k$ for some positive integer $k$.
	We choose $\nu=21.57$. Then we get $D\leq 1.52 \times10^{4906}$. For these values (\ref{D}) holds for $k\geq 385897$ and $m\geq 5$.
	\par Now for $3\leq k\leq 385897$, we present a table that shows suitable values of $\nu$ and an integer $m_k$ corresponding to each value of $k$ such that for all $m \geq m_k$ and that particular $\nu$, (\ref{D}) holds.
	
	\begin{table}
		\begin{center}
			\begin{tabular}{|c|c|c|}
				\hline
				$\nu$ &$k$ &$m_k$ \\
				\hline
				13.7 &1379,1380, \dots ,385896 &6\\
				11.3 & 212, 213, \dots, 1378 &7\\
				10.1 & 84, 85, \dots , 211 &8\\
				9.52 & 48,49, \dots , 83 &9\\
				9.2 & 33,34, \dots,47 &10\\
				8.9 & 26,27,\dots , 32 &11\\
				8.7 & 21,22, \dots , 25 &12\\
				8.6 &18,19,20 &13\\
				8.5 &16,17 &14\\
				8.5 &14,15 &15\\
				8.4 & 13 & 16\\
				8.4& 12& 17\\
				8.5&11&18\\
				8.4 & 10&19\\
				8.4 & 9&21\\
				8.5 & 8&24\\
				8.4 & 7&28\\
				8.5& 6&36\\
				8.8 &5&52\\
				9.4&4&99\\
				11.3&3&520\\
				\hline
			\end{tabular}
			\caption{Values of $\nu$ and $m_k$ corresponding to $k$.\label{table1}}
		\end{center}
	\end{table}

	Analysing the values of $m_k$ and corresponding range of $k$ from Table \ref{table1}, we observe that (\ref{D}) holds for all $k\geq 48$ and $m\geq 9$. Based on this we claim the following:
	
	\begin{lemma}\label{kg3}
		For $k\geq 3$ and $m\geq 9$, $(q,m)\in \mathcal{A}_5^4$. 
	\end{lemma}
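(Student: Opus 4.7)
The plan is to combine the direct sufficient condition~(\ref{D}), already verified in Table~\ref{table1}, with the refined prime sieve of Theorem~\ref{psieve}. First, the table establishes~(\ref{D}) for every pair with $k \geq 48$ and $m \geq 9$, so for these pairs the generic sufficient condition~(\ref{suff}) holds and consequently $(q,m) \in \mathcal{A}_5^4$; nothing further is required in this regime.

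Second, I restrict attention to the finite list of remaining pairs with $3 \leq k \leq 47$ and $9 \leq m < m_k$. For each such pair I would first replace the generic bound $W(q^m - 1) \leq D \cdot q^{m/\nu}$ used in~(\ref{D}) by the exact value of $W(q^m - 1)$ obtained from the prime factorisation of $q^m - 1$, and similarly use the exact value of $W(x^m - 1)$ read off from the factorisation of $x^m - 1$ over $\mathbb{F}_q$. In many of these cases the sharpening is already strong enough to verify~(\ref{suff}) directly, so those pairs drop out of consideration.

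Third, for those stubborn pairs where~(\ref{suff}) still fails even with exact $W$-values, I would invoke the prime sieve of Theorem~\ref{psieve}. The idea is to choose a divisor $d$ of $q^m-1$ consisting of small prime factors (to keep $W(d)$ small), pushing the remaining primes $p_1,\ldots,p_r$ into the sieve, and similarly to choose $g \mid x^m - 1$ containing only a few irreducible factors, with the rest $g_1,\ldots,g_s$ going into the sieve. One then tests whether the sieve inequality~(\ref{primesieve}) holds, subject to the constraint $l > 0$.

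The main obstacle lies in the delicate balance of this choice: shrinking $W(d)W(g)$ tends to leave many primes and irreducibles in the sieve, which increases $r+s$ and drives $l$ toward zero, inflating the sieve constant $\Lambda = (2(r+s)-1)/l + 2$. Finding a workable split $(d,g)$ for each leftover pair is therefore a combinatorial search, to be executed in SageMath pair by pair using the explicit factorisations of $5^{km}-1$ and of $x^m - 1$ over $\mathbb{F}_{5^k}$. I expect the search to close the gap for every remaining pair with $k \geq 3$ and $m \geq 9$, because for $k \geq 3$ the number $q^m - 1$ tends to accumulate enough small prime factors that a favourable split exists; this would complete the proof of the lemma.
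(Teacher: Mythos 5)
Your proposal follows essentially the same route as the paper: dispose of $k\geq 48$ via Table \ref{table1}, sharpen the bound for the finitely many remaining pairs with $3\leq k\leq 47$ and $9\leq m<m_k$ (the paper uses the exact value of $W(x^m-1)$ while retaining the $Dq^{m/\nu}$ bound for $W(q^m-1)$, reducing to $220$ candidate pairs), and then close each surviving pair by a computer search for $d$ and $g$ satisfying the sieve inequality (\ref{primesieve}). Since the paper's own proof is likewise an assertion that this computation succeeds for every such pair, your argument matches it in both structure and level of detail.
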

	\begin{proof}
		Since for $3\leq k\leq 47$, (\ref{D}) holds for all $m\geq m_k$, so for $9\leq k<m_k$, we first check the following inequality,
		\begin{equation}\label{D1}
			q^{\frac{m}{2}-2}>(2n+1)D^2q^{\frac{2m}{\nu}}(W(x^m-1))^2.
		\end{equation}
		This inequality reduces the number of possible exceptional pairs to $220$. For all these possibilities, we can find suitable values of $d$ and $g$ ( as defined in Theorem \ref{psieve}), so that (\ref{primesieve}) holds.
	\end{proof}
	
	We study the pairs for $k=1,2$ separately. Let us consider $m=m'5^j$ for $j\geq0$ such that $\gcd(5,m')=1$. Then we can divide our discussion into the following two cases:\\
	(i)  $m'| q^2-1$,\\
	(ii)  $m' \nmid q^2-1$.
	\newline
	
	\textbf{Case I}: First let us consider the pairs $(q,m)$, where $m'| q^2-1$ and $k=1$.
	\par Then possible values of $m' $ are $1$, $2$, $3$, $4$, $6$, $8$, $12$, $24$. Since 
	$W(x^m-1)=W(x^ {m'}-1)$, So we can rewrite (\ref{D}) as 
	\begin{equation}\label{m'nk1}
		q^{\frac{m'5^j}{2}-2}>(2n+1)D^2q^{\frac{2m'5^j}{\nu}}2^{2m'}.
	\end{equation}
	Then (\ref{m'nk1}) holds when,\\
	(i) $m'=1$ and $j\geq 4$,\\
	(ii) $m'=2$ ,$3$, $4$, $6$ and $j\geq 3$,\\
	(iii) $m'=8$ ,$12$, $24$ and $j\geq 2$.\\
	Therefore $(5,m)\in \mathcal{A}_5^4$ unless $m$ $=$ $10$, $12$, $15$, $20$, $24$, $25$, $30$, $40$, $50$, $60$, $75$, $100$, $120$, $125$, $150$.
	For these pairs we directly check sufficient condition (\ref{suff}) and reduce the list of possible exceptional values of $m$ to $10$, $12$, $15$, $20$, $24$, $30$. Again for these values of $m$, we try to find suitable $d$ and $g$ ( as defined in Theorem \ref{psieve}) for which (\ref{primesieve}) is satisfied ( see Table \ref{table2}). But for the pairs $(5,10)$, $(5,12)$, $(5,20)$, $(5,24)$; we could not find such $d$ and $g$. So in this case, these are the possible exceptional pairs.
	\par Now let $k=2$ and $m' \nmid q^2-1$. \\
	Then possible values of $m'$ are $1$, $2$, $3$, $4$, $6$, $8$, $12$, $13$, $16$, $24$, $26$, $39$, $48$, $52$, $76$, $104$, $156$, $208$, $312$, $624$. We find out that (\ref{m'nk1}) holds when,
	(i) $m'=1$ ,$2$, $3$ and $j\geq 3$,\\
	(i) $m'=4$, $6$, $8$, $12$, $13$, $16$ and $j\geq 2$,\\
	(i) $m'=24$, $26$, $39$, $48$, $52$, $76$, $104$, $156$, $208$, $312$, $624$ and $j\geq 1$.\\
	Therefore $(25,m) \in \mathcal{A}_5^4$ unless $m=10$, $12$, $13$, $15$, $16$, $20$, $24$, $25$, $26$, $30$, $39$, $40$, $48$, $50$, $52$, $60$, $65$, $75$, $76$, $80$, $104$, $156$, $208$, $312$, $624$. We reduce theses exceptions by removing the pairs $(25,m)$ that satisfies (\ref{suff}). This leaves us with possible exceptional values of $m=10,12,13,15,16,24,48.$ For these pairs we try to find $d$ and $g$(as defined in Theorem (\ref{psieve}) such that (\ref{primesieve}) holds( see Table \ref{table2}). Since we fail to find such $d$ and $g$ for $m=10,12,15$, so $(25,10),(25,12),(24,25)$  adds up to the list of exceptional pairs.
	
	\begin{table}
		\begin{center}
			\begin{tabular}{|c|c|c|c|c|c|c|c|}
				\hline
				SL No. &$(q,m)$ &$d$ &$r$ &$g$ &$s$ &$l$ &$L$\\ 
				\hline
				1&(5,13) &2 &1 &$(x+4)$ &3 &0.990 &9.07\\ 
				2&(5,15) &2 &5 &$(x+4)$ &1 &0.633 &19.37\\ 
				3&(5,21) &2 & 4&$(x+4)$ &4 &0.850 &19.65 \\ 
				4&(5,22) &6 &4 &$(x+1)$ &5 &0.480 &37.04 \\
				5&(5,30) &462 &7 &$(x+1)$ &3 &0.298 &65.71 \\
				6&(5,80) &66 &12 &$(x+1)(x+2)(x+3)(x+4)$ &4 &0.466 &68.50 \\
				7&$(5^2,11)$ &6 &4 &$(x+4)$ &2 &0.883 &14.46 \\
				8&$(5^2,13)$ &6 &3 &$(x+4)$ &6 &0.980 &19.34\\
				9&$(5^2,14)$ &6 &5 &$(x+1)$ &5 &0.692 &29.45\\
				10&$(5^2,15)$ &6 &9 &$(x+4)$ &2 &0.231 &93.03 \\
				11&$(5^2,18)$ &42 &8 &$(x+1)$ &9 &0.216 &155.00\\
				12&$(5^2,21)$ &6 &10 &$(x+4)$ &8 &0.348 &102.67 \\
				13&$(5^2,36)$ &546 &12 &$(x+1)(x+2)(x+3)(x+4)$ &16 &0.095 &579.62\\
				14&$(5^2,48)$ &9282 &10 &$g'$ &22 &0.066 &716.32 \\
				\hline
			\end{tabular}
			
			\caption{Pairs $(q, m)$ when $k=1,2$, for which Theorem \ref{primesieve} holds for the above choices of $d$, $r$, $g$ and $s$.\label{table2}}
			[Here $g'=(x + 1)(x + 2)(x + 3)(x + 4)(x + \beta)(x + \beta + 1) (x + \beta + 2)(x + \beta + 3)(x + \beta + 4)(x + 2\beta)(x + 2\beta + 1)(x + 2\beta + 2)(x + 2\beta + 3)(x + 2\beta + 4),$ with $\beta$ as an generating element of the field.]
			
		\end{center}
		
	\end{table}
	
	Before moving to the second case let us recall the following lemmas.
	
	\begin{lemma}[\cite{cohen3}]\label{delbound}
		Suppose $q=5^k, m'>4$ and $\Bar{m}=\gcd(q-1, m')$. If $M'$ denotes the number of distinct irreducible factors of $x^{m'} -1$ over $\mathbb{F}_q$, $e$ is the order of $q(\mod {m'})$ and $\delta(q,m)= \frac{M'}{e}$; Then the  following holds.
		\begin{itemize}
			\item $\delta(q,m)\leq \frac{1}{2}$, for $m=2\Bar{m}$,
			\item $\delta(q,m)\leq \frac{3}{8}$, for $m= 4\Bar{m}$,
			\item $\delta(q,m)\leq \frac{13}{36}$, for $m= 6\Bar{m}$,
			\item $\delta(q,m)\leq \frac{1}{3}$, otherwise.
		\end{itemize}
		
	\end{lemma}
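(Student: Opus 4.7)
The proof of Lemma~\ref{delbound} is classical and appears in Cohen~\cite{cohen3}; I outline the plan here. The starting point is the exact cyclotomic formula
\[
M' \;=\; \sum_{d\mid m'}\frac{\phi(d)}{\operatorname{ord}_d(q)}, \qquad e\;=\;\operatorname{ord}_{m'}(q),
\]
which follows from the factorisation $x^{m'}-1=\prod_{d\mid m'}\Phi_d(x)$ together with the standard fact that over $\mathbb{F}_q$ each cyclotomic polynomial $\Phi_d$ splits into $\phi(d)/\operatorname{ord}_d(q)$ distinct monic irreducibles, all of degree $\operatorname{ord}_d(q)$. The crucial structural input is the dichotomy $\operatorname{ord}_d(q)=1$ if and only if $d\mid\bar{m}$, together with the fact that $\operatorname{ord}_d(q)\mid e$ for every $d\mid m'$.

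The plan is then to split the sum defining $M'$ into the contribution from divisors of $\bar{m}$ (which equals $\sum_{d\mid\bar{m}}\phi(d)=\bar{m}$) and the tail contribution from divisors $d\mid m'$ with $d\nmid\bar{m}$ (for which $\operatorname{ord}_d(q)\geq 2$). In each of the three listed special cases I would parametrise these tail divisors explicitly and determine $\operatorname{ord}_d(q)$ precisely by an elementary lifting-the-exponent argument applied to $q-1=5^k-1$. For instance, when $m=2\bar{m}$ and $\bar{m}=2^{a}b$ with $b$ odd, the tail divisors are exactly $2^{a+1}c$ with $c\mid b$, and one checks directly that $\operatorname{ord}_{2^{a+1}c}(q)=2$ and $e=2$; summing the resulting contributions and comparing with $e$ yields the bound $1/2$. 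The cases $m=4\bar{m}$ and $m=6\bar{m}$ proceed by analogous but more delicate enumerations of the new divisors stratified by their $2$-adic and $3$-adic valuations, and produce the tight constants $3/8$ and $13/36$ respectively.

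For the generic ``otherwise'' case the plan is uniform: since the excluded ratios $2,4,6$ have already absorbed the small-prime obstructions, in every remaining case a positive proportion of the tail divisors satisfies $\operatorname{ord}_d(q)\geq 3$, and a routine averaging estimate against $e$ produces the uniform bound $1/3$. The main obstacle in carrying out the argument is the combinatorial bookkeeping in the $4\bar{m}$ and $6\bar{m}$ cases, where one must track precisely how the $2$-adic (respectively $3$-adic) valuation of $5^k-1$ interacts with the corresponding valuation of $m'$ in order to pin down $\operatorname{ord}_d(q)$ for every new divisor $d$; the bounds $3/8$ and $13/36$ are sharp and are attained exactly in the boundary subcases where these valuations align in a prescribed way, which is what makes the constants non-obvious.
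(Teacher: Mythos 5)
First, a point of order: the paper itself contains no proof of this lemma --- it is quoted from Cohen and Huczynska \cite{cohen3}, and quoted with typographical corruption --- so there is no internal argument to compare yours against. That corruption is exactly where your plan goes astray. In the source, $M'$ is the number of distinct monic irreducible factors of $x^{m'}-1$ over $\mathbb{F}_q$ \emph{of degree less than} $e$, and the bounded ratio is $\delta = M'/m'$, not $M'/e$. Your plan instead bounds the \emph{total} factor count $\sum_{d\mid m'}\phi(d)/\operatorname{ord}_d(q)$ against $e$, and that cannot work: in your own $m'=2\bar m$ computation the total count is $\bar m+\tfrac12\sum_{c\mid b}\phi(2^{a+1}c)=\tfrac34 m'$, so ``comparing with $e=2$'' gives $\tfrac38 m'$, which grows with $m'$ and is nowhere near $\tfrac12$. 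Concretely, take $q=5$, $m'=6$ (an ``otherwise'' case, since $\bar m=2$): $x^6-1$ has $4$ distinct irreducible factors over $\mathbb{F}_5$ and $e=2$, so the total-count ratios are $M'/e=2$ and $M'/m'=2/3$, both violating the claimed bound $1/3$; only the count of factors of degree less than $e$ (the two linear ones) divided by $m'$ gives $1/3$. This corrected reading is also the one the paper actually uses in Section 5, where $g$ is taken to be the product of the irreducible factors of degree less than $e$ and $W(g)$ is bounded by $2^{m\delta(q,m')}$.

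The toolkit you invoke --- the factorization $x^{m'}-1=\prod_{d\mid m'}\Phi_d$, the count $\phi(d)/\operatorname{ord}_d(q)$ of irreducible factors of $\Phi_d$, each of degree $\operatorname{ord}_d(q)$, and the equivalence $\operatorname{ord}_d(q)=1\iff d\mid\bar m$ --- is precisely the right one, and once the correct quantity is targeted the argument is in fact easier than your sketch suggests. For $m'=2\bar m$ one has $e=2$ (since $\bar m\mid q-1$ and $2\mid q+1$ force $m'\mid q^2-1$, while $m'\nmid q-1$), so the factors of degree less than $e$ are exactly the $\gcd(m',q-1)=\bar m=m'/2$ linear ones and $\delta=1/2$ immediately; the verification $3/8$ for $m'=4\bar m$ (e.g.\ $q=5$, $m'=16$: six factors of degree less than $e=4$) and $13/36$ for $m'=6\bar m$ reduce to counting the divisors $d\mid m'$ with $\operatorname{ord}_d(q)<e$, stratified by $2$- and $3$-adic valuations much as you propose. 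So the repair is: first fix the statement (insert ``of degree less than $e$,'' replace $M'/e$ by $M'/m'$, and read $m'$ for $m$ in the case conditions), and then redirect your divisor-by-divisor analysis at the small-degree factors rather than at all of them.
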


	\begin{lemma}[\cite{sharma2},Lemma 5.3]
		Assume that $q=p^k,k \in \mathbb{N}$ and $m=m'p^j, j\geq 0$ is a positive integer such that $m'\nmid q-1$ and $\gcd(m',p)=1$.  Let $e (>2)$ denote the order of $q \mod m'$. Then $d=q^m-1$ and $g$ as the product of the irreducible factors of $x^{m'}-1$ of degree less than $e$. Then, in the notation of Lemma  $\ref{primesieve}$, we have $L\leq 2m'$.
		
	\end{lemma}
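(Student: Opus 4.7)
The plan is to unpack the specialized prime-sieve quantity directly under the stated choice of $d$ and $g$, and then finish with an elementary degree count on $x^{m'}-1$ together with the congruence $q^{e}\equiv 1\pmod{m'}$.

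First I would translate the hypotheses into the notation of Theorem~\ref{psieve}. Taking $d=q^{m}-1$ exhausts all prime divisors of $q^{m}-1$, so the list $p_{1},\dots,p_{r}$ is empty and $r=0$, whence $L=\Lambda=\dfrac{2s-1}{l}+2$. Because $\gcd(m',p)=1$ and $m=m'p^{j}$, one has the identity $x^{m}-1=(x^{m'}-1)^{p^{j}}$ in $\mathbb{F}_{q}[x]$, so the distinct monic irreducible factors of $x^{m}-1$ coincide with those of $x^{m'}-1$. Every such factor has degree dividing $e=\operatorname{ord}_{m'}(q)$, and by definition $g$ collects precisely those of degree less than $e$; hence each of the $s$ remaining factors $g_{i}$ has $\deg g_{i}=e$, so that $l=1-\sum_{i=1}^{s}q^{-e}=1-s/q^{e}$.

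Next I would establish two small arithmetic inputs. Comparing degrees in $x^{m'}-1$ and using that the $s$ remaining factors all have degree $e$ yields $se\le m'$, i.e.\ $s\le m'/e$. Separately, by the very definition of $e$ one has $q^{e}\equiv 1\pmod{m'}$ with $q^{e}>1$, so $q^{e}\ge m'+1$. Combined with the hypothesis $e\ge 3$, these give $s/q^{e}\le (m'/e)/(m'+1)<1/e\le 1/3$, so that $l>2/3>0$ (in particular the sieve really is applicable) and $1/l<3/2$.

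The finish is then immediate: $L=(2s-1)/l+2<3s+\tfrac{1}{2}\le 3(m'/3)+\tfrac{1}{2}=m'+\tfrac{1}{2}\le 2m'$ for every $m'\ge 1$. The one step where genuine care is needed is bounding the denominator $l=1-s/q^{e}$ safely away from zero, since this is what legitimizes dividing by $l$ and is the only place the restriction $e>2$ is truly used; the remainder of the argument is just the degree count $se\le m'$ together with the congruence $q^{e}\equiv 1\pmod{m'}$.
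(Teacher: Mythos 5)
Your argument is correct, and since the paper only cites this statement from \cite{sharma2} (Lemma 5.3) without reproducing a proof, there is no in-paper argument to compare against; your derivation is the standard one for bounds of this type and stands on its own. The key steps all check out: with $d=q^m-1$ you indeed get $r=0$; since $\gcd(m',p)=1$ gives $x^m-1=(x^{m'}-1)^{p^j}$, the distinct irreducible factors of $x^m-1$ are exactly those of $x^{m'}-1$, each of degree dividing $e$, so the $s$ factors left outside $g$ all have degree exactly $e$; the squarefreeness of $x^{m'}-1$ gives $se\le m'$; and $q^e\equiv 1\pmod{m'}$ with $q^e>1$ gives $q^e\ge m'+1$, so $s/q^e<1/e\le 1/3$. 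One small caveat: you took $l=1-\sum_i q^{-\deg(g_i)}$ exactly as displayed in Theorem \ref{psieve}, but the proof of that theorem (which adds $2\sum_i\Theta(g_i)$ back in) indicates the intended definition is $l=1-2\sum_i 1/p_i-2\sum_i q^{-\deg(g_i)}$, i.e.\ with a factor of $2$ on the polynomial sum as well. This does not break your proof: with the factor of $2$ one gets $l>1-2/e\ge 1/3$, hence $L<3(2s-1)+2=6s-1\le 6m'/e-1\le 2m'-1<2m'$, so the conclusion $L\le 2m'$ survives with only the constants adjusted. It would be worth stating explicitly which normalization of $l$ you are using, and also noting (as you implicitly do) that the case $s=0$ gives $L=2-1/l\le 2\le 2m'$ trivially.
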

	Since $m'\nmid q^2-1$ implies $m'\nmid q-1$ and $e>2$. We can transform (\ref{D}) using the above two  lemmas as follows:
	
	\begin{equation*}
		q^{\frac{m'5^j}{2}-2}>9D^2q^{\frac{2m'5^j}{\nu}}2^{2m'\delta(q,m')}2m',
	\end{equation*}
	which is also true if 
	\begin{equation}\label{del}
		q^{\frac{m}{2}-2}>18D^2q^{\frac{2m}{\nu}}2^{2m\delta(q,m')}m.
	\end{equation}
	Now we can proceed to the second case.
	
	\textbf{Case II:} $m' \nmid q^2-1$
	Let $k=1$. Then possible values of $m'$ are $5$, $7$, $9$, $10$, $11$, $13$, $14$, $15$, $16$, $17$, $18$, $19$, $20$, $21$, $22$, $23$. Out of these only $16$ is there such that $16=4.\Bar{m}$. So for others we can rewrite (\ref{del}) as
	\begin{equation}\label{del}
		5^{\frac{m}{2}-2}>18D^2m5^{\frac{2m}{\nu}}2^{\frac{2m}{3}}.
	\end{equation}
	For $\nu =11.3$, (\ref{del}) holds for$m\geq 1566$. So we list out the possible exceptional values of $m$ in this case to be $m= 9$, $10$, $11$, $13$, $14$, $15$, $17$, $18$, $20$, $21$, $22$, $23$, $25$, $35$, $45$, $50$, $55$, $65$, $70$, $75$, $85$, $90$, $95$,  $100$, $105$, $110$, $115$, $125$, $175$, $225$, $250$, $275$, $325$, $350$, $375$, $425$, $450$, $475$, $500$, $525$, $550$, $575$, $625$, $875$, $1125$, $1250$, $1375$. Out of them $m=9$, $10$, $11$, $13$, $14$, $15$, $18$, $20$, $21$, $22$ don't satisfy (\ref{suff}). For these values we try to calculate $d$ and $g$ so that condition (\ref{primesieve}) is satisfied ( see Table \ref{table2}). For $m=9$, $10$, $11$, $14$, $18$ and $20$, we fail to do so. Therefore they also belong to the list of exceptional values of $m.$ 
	\par The case $m'=16$ is handled separately. For this (\ref{m'nk1})
	takes the form 
	\begin{equation}\label{del2}
		5^{\frac{m}{2}-2}>18D^2m5^{\frac{2m}{\nu}}2^{\frac{3m}{4}}.
	\end{equation}
	For $\nu =11.3$, (\ref{del2}) holds for $m\geq 342$. This leaves us with $m=16$, $80$. But for $m=16$, neither (\ref{suff})
	holds nor we could find any $d$ and $g$ to satisfy (\ref{primesieve}). So $(5,16)$ is also a possible exceptional pair. With this we exhaust the case for $k=1.$
	\par For $k=2$, there is no $m'$ such that $m'=2\Bar{m},4\Bar{m} $ or $6\Bar{m}$. So here we can rewrite (\ref{m'nk1}) as 
	\begin{equation}\label{del3}
		(25)^{\frac{m}{2}-2}>18D^2m{(25)}^{\frac{2m}{\nu}}2^{\frac{2m}{3}}.
	\end{equation}
	With $\nu =11.3$, (\ref{del3}) holds for all $m\geq 159$.\\
	Therefore we need to check for $m \in \{ 9,10,..,158\} \backslash \{12,13,16,24,26,39,48,52,78,104,156\}.$
	Out of these, for $m=9,$ $10$, $11$, $14$, $15$, $18$, $21$ and $36$, (\ref{suff}) do not hold. So we search for $d$ and $g$ so that (\ref{primesieve}) holds for them. But for $m=9$, $10$, we couldnot
	find such $d$ and $g$, so $(25,9)$, $(25,10)$ are added up to the list of possible exceptional pairs.
	
	Summarizing the discussions in this section we obtain the proof of Theorem \ref{main}.

	\section{Declarations}
	\textbf{Conflict of interest} The authors declare no competing interests.
	
	\textbf{Ethical Approval} Not applicable.
	
	\textbf{Funding} The first author is supported by NFOBC fellowship (NBCFDC Ref. No. 231610154828).
	
	\textbf{Data availability} Not applicable.

\end{document}